\title[The hypergeom. datum $\big((\frac{1}{2},\frac{1}{6},\frac{5}{6}),(1,1)\big)$ and sym. squares of elliptic curves]{A note on the hypergeometric datum $\big((\frac{1}{2},\frac{1}{6},\frac{5}{6}),(1,1)\big)$ and symmetric squares of elliptic curves}
\author{Pengcheng Zhang}
\address{Max Planck Institute for Mathematics, Vivatsgasse 7, 53111 Bonn, Germany}
\email{pzhang@mpim-bonn.mpg.de}
\date{29 December 2025}
\newtheorem{theorem}{Theorem}[section]
\newtheorem{lemma}[theorem]{Lemma}
\newtheorem{proposition}[theorem]{Proposition}
\newtheorem{corollary}[theorem]{Corollary}
\theoremstyle{remark}
\newtheorem{remark}[theorem]{Remark}
\def\MR#1{}
\newcommand\fr[1]{\mathfrak{#1}}
\newcommand\ov[1]{\overline{#1}}
\newcommand\bo[1]{\boldsymbol{#1}}
\newcommand{\Q}{\mathbb{Q}}
\newcommand{\Z}{\mathbb{Z}}
\newcommand{\C}{\mathbb{C}}
\newcommand{\F}{\mathbb{F}}
\newcommand{\bslash}{\backslash}
\newcommand{\simarrow}{\xrightarrow{\;\sim\;}}
\newcommand{\eq}{\;=\;}
\newcommand{\deq}{\;:=\;}
\def\thin{{\hskip 1pt}}
\newcommand{\hash}{\texttt{\#}}
\newcommand{\N}{\mathrm{N}}
\newcommand{\GL}{\mathrm{GL}}
\newcommand{\tr}{\mathrm{tr}}
\newcommand{\Gal}{\mathrm{Gal}}
\newcommand{\Sym}{\mathrm{Sym}}
\newcommand{\et}{\mathrm{\acute{e}t}}
\newcommand{\disc}{\mathrm{disc}}
\newcommand{\Frob}{\mathrm{Frob}}
\newcommand{\Trun}{\mathrm{Trun}}
\mathchardef\pFcomma=\mathcode`, 
\newcommand*\pFq[5]{%
  \begingroup
  \begingroup\lccode`~=`,
    \lowercase{\endgroup\def~}{\mkern\pFqskip}%
  \mathcode`,=\string"8000
  {}_{#1}F_{#2}\biggl[\genfrac..{0pt}{}{#3}{#4};#5\biggr]%
  \endgroup
}
\newcommand*\smallpFq[5]{%
  \begingroup
  \begingroup\lccode`~=`,
    \lowercase{\endgroup\def~}{\mkern\pFqskip}%
  \mathcode`,=\string"8000
  {}_{#1}F_{#2}\bigl[\genfrac..{0pt}{}{#3}{#4};#5\bigr]%
  \endgroup
}
\begin{document}

\maketitle

\begin{abstract}
This is an expository note on a mod $p$ congruence relating the truncated hypergeometric sums associated to $\big((\frac{1}{2},\frac{1}{6},\frac{5}{6}),(1,1)\big)$ to symmetric squares of elliptic curves.
\end{abstract}

\setcounter{tocdepth}{1}
\tableofcontents

\section{Introduction}

In this short note, we will prove a mod $p$ congruence relating the truncated hypergeometric sums associated to $\big((\frac{1}{2},\frac{1}{6},\frac{5}{6}),(1,1)\big)$ to symmetric squares of elliptic curves. Though this congruence should follow from previous works on hypergeometric functions, the result does not appear very well-documented (see \Cref{reference-remark}). This note thus serves mostly as an exposition which aims at proving the congruence in a self-contained way.

We first make some definitions. Given a hypergeometric datum $(\bo{\alpha},\bo{\beta})$ with
\begin{align*}
    \bo{\alpha}&\eq(\alpha_1,\cdots,\alpha_n)\in\C^n \\
    \bo{\beta}&\eq(\beta_1,\cdots,\beta_{n-1})\in\C^{n-1},
\end{align*}
the associated hypergeometric function is defined as
\begin{align*}
    \pFq{n}{n-1}{\bo{\alpha}}{\bo{\beta}}{z}\deq\sum_{m=0}^\infty\frac{(\alpha_1)_m(\alpha_2)_m\cdots(\alpha_n)_m}{(\beta_1)_m(\beta_2)_m\cdots(\beta_{n-1})_m}\cdot \frac{z^m}{m!}
\end{align*}
and the truncated hypergeometric sum is defined as
\begin{align*}
    \pFq{n}{n-1}{\bo{\alpha}}{\bo{\beta}}{z}_{r}\deq\sum_{m=0}^r\frac{(\alpha_1)_m(\alpha_2)_m\cdots(\alpha_n)_m}{(\beta_1)_m(\beta_2)_m\cdots(\beta_{n-1})_m}\cdot \frac{z^m}{m!},
\end{align*}
where $(a)_m:=\prod_{j=0}^{m-1}(a+j)$. In general, for a power series

Let $L$ be a number field and let $\fr{p}$ be a prime of $L$. Let $\N(\fr{p})$ denote the ideal norm of $\fr{p}$, $v_\fr{p}$ denote the $\fr{p}$-adic valuation on $L$, and $\F_\fr{p}$ denote the residue field at $\fr{p}$. For $a\in L$ with $v_\fr{p}(a)\geq 0$, define
\begin{align*}
    \bigg(\frac{a}{\fr{p}}\bigg)\deq\begin{cases}1&\text{ if $a$ is a square in $\F_\fr{p}^\times$}, \\ -1&\text{ if $a$ is not a square in $\F_\fr{p}^\times$}, \\ 0&\text{ if $a=0$ in $\F_\fr{p}$}. \end{cases}
\end{align*}
When $\fr{p}\nmid 2$, it is easy to check that
\begin{align*}
    \bigg(\frac{a}{\fr{p}}\bigg)\eq a^{\frac{\N(\fr{p})-1}{2}}
\end{align*}
as elements in $\F_\fr{p}$.

Now, for an elliptic curve $E/L$ and a good prime $\fr{p}$ of $E$, define
\begin{align*}
    a_\fr{p}(E)\deq\N(\fr{p})+1-|E(\F_\fr{p})|.
\end{align*}

\begin{theorem}
\label{congruence-theorem}
Let $L$ be a number field, $E/L$ be an elliptic curve with $j(E)\notin\{0,1728\}$, and $\fr{p}$ be a prime of $L$ that is a good prime of $E$. Suppose that $\fr{p}\nmid 6$ and $v_\fr{p}(j(E))=0=v_\fr{p}((j(E)-1728))$. Then,
\begin{align*}
    a_\fr{p}(E)^2\;\equiv\;\bigg(\frac{1-1728/j(E)}{\fr{p}}\bigg)\cdot\pFq{3}{2}{\frac{1}{2},\frac{1}{6},\frac{5}{6}}{1,1}{\frac{1728}{j(E)}}_{\N(\fr{p})-1}\pmod{\fr{p}}.
\end{align*}
\end{theorem}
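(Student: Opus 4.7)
The plan is to combine Clausen's classical identity, which expresses the $_3F_2$ in the statement as the square of a $_2F_1\bigl(\tfrac{1}{12}, \tfrac{5}{12}; 1; z\bigr)$, with a Beukers-type mod-$\fr{p}$ congruence computing the Frobenius trace of an explicit elliptic curve with $j$-invariant $j(E)$ via this same $_2F_1$. Since $a_{\fr{p}}(E)^2$ is invariant under quadratic twists of $E$, I first replace $E$ by the Ramanujan-Sato model
\begin{align*}
E_j\colon y^2 \eq 4x^3 - \frac{27j}{j-1728}\,x - \frac{27j}{j-1728},
\end{align*}
with $j\deq j(E)$; this model has $j$-invariant $j$ and is smooth over $\F_{\fr{p}}$ under the stated hypotheses.

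The Beukers-type congruence is derived from the Hasse-invariant formula: $-a_{\fr{p}}(E_j) \equiv H(E_j) \pmod{\fr{p}}$, where, with $N \deq \N(\fr{p})$, $H(E_j)$ is the coefficient of $x^{N-1}$ in the polynomial $\bigl(4x^3 - \tfrac{27j}{j-1728} x - \tfrac{27j}{j-1728}\bigr)^{(N-1)/2}$. Expanding via the multinomial theorem, extracting the common factor $\bigl(-\tfrac{27j}{j-1728}\bigr)^{(N-1)/2}$ (which reduces to a quadratic character in $\F_{\fr{p}}^\times$), and identifying the remaining combinatorial sum with a truncated $_2F_1$ by recognising the appearing falling factorials as Pochhammer symbols at $1/12$ and $5/12$, yields a congruence of the form
\begin{align*}
a_{\fr{p}}(E_j) \;\equiv\; \chi \cdot \pFq{2}{1}{\frac{1}{12}, \frac{5}{12}}{1}{\frac{1728}{j}}_{N-1} \pmod{\fr{p}}
\end{align*}
for an explicit quadratic character $\chi$ of $\F_{\fr{p}}^\times$ depending on $j$ and $j-1728$.

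Next, I would upgrade Clausen's classical power series identity
\begin{align*}
\Bigl[\pFq{2}{1}{\frac{1}{12}, \frac{5}{12}}{1}{z}\Bigr]^2 \eq \pFq{3}{2}{\frac{1}{2}, \frac{1}{6}, \frac{5}{6}}{1, 1}{z}
\end{align*}
to a truncated-sum congruence modulo $\fr{p}$: the square of the $_2F_1$-truncation at $N-1$ agrees, as a polynomial, with the $_3F_2$-truncation only up to degree $N-1$, so one must show the "tail" of degrees $N,\ldots,2(N-1)$ vanishes modulo $\fr{p}$ when evaluated at $z=1728/j$. This vanishing is controlled by the $\fr{p}$-adic valuations of $(1/12)_m$ and $(5/12)_m$ for $m$ in the relevant range (where $\fr{p}\nmid 6$ becomes crucial), and may be combined with Euler's transformation $\pFq{2}{1}{1/12, 5/12}{1}{z} = (1-z)^{1/2}\,\pFq{2}{1}{11/12, 7/12}{1}{z}$. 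Squaring the Beukers congruence, substituting the truncated Clausen identity, and carefully tracking the characters introduced along the way produces the factor $\bigl(\tfrac{1-1728/j}{\fr{p}}\bigr)$ in the statement.

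The main obstacle is the upgrade of Clausen's formal identity to the truncated-sum congruence modulo $\fr{p}$: the naive truncation gives only a polynomial congruence modulo $z^N$, not an equality at the specific $\fr{p}$-adic value $z=1728/j$, and controlling the tail requires a careful $\fr{p}$-adic analysis. This step is also where the precise character $\bigl(\tfrac{1-1728/j}{\fr{p}}\bigr)$ should emerge, ultimately reflecting the $(1-z)^{1/2}$ factor in Euler's transformation.
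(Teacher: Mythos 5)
Your overall architecture (a Hasse-invariant/Beukers congruence expressing $a_\fr{p}$ of an explicit model via a truncated ${}_2F_1$, followed by a truncated Clausen identity) is the same as the paper's, but your choice of parameters $\big(\tfrac{1}{12},\tfrac{5}{12}\big)$ at the argument $1728/j$ over $L$ itself, rather than the paper's $\big(\tfrac16,\tfrac56\big)$ at $\tfrac{1-\sqrt{z_0}}{2}$ over $L(\sqrt{z_0})$, hides two genuine gaps. First, the claimed congruence $a_\fr{p}(E_j)\equiv\chi\cdot\smallpFq{2}{1}{\frac{1}{12},\frac{5}{12}}{1}{1728/j}_{N-1}$ with $\chi$ \emph{quadratic} is not established and is doubtful as stated: writing out the coefficient of $x^{N-1}$ in $\big(4x^3-\tfrac{27j}{j-1728}x-\tfrac{27j}{j-1728}\big)^{(N-1)/2}$, the multinomial index runs over a range of the form $\big[\tfrac{N-1}{6},\tfrac{N-1}{4}\big]$ rather than starting at $0$, and after reindexing the extracted prefactor is a $6$th/$4$th-power residue of $\tfrac{27j}{j-1728}$ (reflecting that the period here is $E_4^{1/4}$), not a quadratic character. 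This is precisely why the paper instead passes to the curve $E_1$ over $L(\sqrt{z_0})$, whose Hasse invariant reindexes cleanly to Pochhammer symbols at $\tfrac16,\tfrac56$ (\Cref{congruence-needed-for-ap(E1)} and \Cref{ap(E1)}).

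Second, and more fundamentally, the character $\big(\tfrac{1-1728/j}{\fr{p}}\big)$ in the theorem cannot come from squaring a quadratic character $\chi$, since $\chi^2=1$; so in your scheme it must be produced by the truncated Clausen step, which is exactly the point you defer. That step is not a matter of showing a ``tail'' vanishes: for instance when $N=p\equiv 11\pmod{12}$ one has $\min\{[-\tfrac{1}{12}]_0,[-\tfrac{5}{12}]_0\}=\tfrac{7p-5}{12}>\tfrac{p}{2}$, so the square of the degree-$(N-1)$ truncation has cross terms of degree $\geq p$ whose coefficients do not vanish termwise mod $p$; the needed statement is a congruence of \emph{functions} on $\F_\fr{p}$ in which the tail contributes exactly the sign $\big(\tfrac{1-z}{\fr{p}}\big)$, and ``combining with Euler's transformation $(1-z)^{1/2}$'' is only a heuristic for where that sign comes from, not a proof. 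Supplying this is the real content of the theorem, and it is what the paper's Sections 4--5 do via the truncation bound $\lfloor\tfrac{p^l-1}{6}\rfloor$ (\Cref{2f1-divisible-by-p}, \Cref{truncated-clausen}), the $p^{2l}$-to-$p^l$ factorization (\Cref{2f1-p2-and-p}), and especially the functional congruence $\smallpFq{2}{1}{\frac{1}{6},\frac{5}{6}}{1}{t}\equiv(-1)^{(p^l-1)/2}\smallpFq{2}{1}{\frac{1}{6},\frac{5}{6}}{1}{1-t}$ of \Cref{2f1-z-to-(1-z)}, which is proved by a finite-field character-sum argument with no analogue in your outline. Until you prove your truncated Clausen congruence with the character factor (and pin down the correct character in the Beukers step), the argument is incomplete at its central point.
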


\begin{remark}
\label{reference-remark}
We make several remarks on the relating works and references.
\begin{enumerate}[label=(\arabic*)]
    \item An analogue of this congruence (at least over $\Q$) is known for hypergeometric character sums by \cite[Theorem~1]{hoffman-li-long-tu} where one obtains an equality between $a_p(E)^2-p$, i.e., the trace of Frobenius at $p$ on $\Sym^2 H_\et^1(E,\Q_\ell)$, and the product of $(\frac{1-1728/j(E)}{p})$ and the hypergeometric character sum associated to $\big((\frac{1}{2},\frac{1}{6},\frac{5}{6}),(1,1)\big)$. Here one should be able to relate hypergeometric character sums to truncated hypergeometric sums (see \cite[Theorem~4]{long-numeric}, \cite[Lemma~5.9]{allen-grove-long-tu}, \cite[Section~4.5]{hoffman-li-long-tu}, and \cite[Section~7.2]{babei-roy-swisher-tobin-tu}).
    \item In the CM case, it is expected that the congruence should hold modulo $\fr{p}^2$, namely, the phenomenon of \emph{supercongruences} arises, if one replaces $a_\fr{p}(E)^2$ by $a_\fr{p}(E)^2-2\N(\fr{p})$. Here we refer to \cite[Theorem~4]{mortenson}, \cite[Theorem~1.3]{sun-zw}, \cite[Section~5 and Section~6]{babei-roy-swisher-tobin-tu}, and \cite[Theorem~1.26]{beukers-supercong} for relevant results on supercongruences over $\Q$, we refer to \cite[Observation~4]{zudilin-hypergeom-cy} for some explicit observations on the supercongruences in the ${}_3F_2$ case, and we refer to \cite{rodriguez-villegas-hypergeom-cy,rrv-supercong} for general discussions on hypergeometric supercongruences.
    \item The result by Asakura \cite[Corollary~0.2]{asakura} may be viewed as some analogue of this congruence from the viewpoint of Galois representations.
\end{enumerate}
\end{remark}

\subsection*{Acknowledgements}

The author would like to thank Ling Long, Danylo Radchenko, Fernando Rodriguez Villegas, Fang-Ting Tu, and Wadim Zudilin for useful discussions on hypergeometric functions. Moreover, the author is especially grateful to Chi Hong Chow for his vital idea on proving \Cref{2f1-z-to-(1-z)}.

\section{Preliminaries on elliptic curves}

We start by recalling some well-known preliminary results on elliptic curves for completeness.

\begin{lemma}
\label{isom-over-quad}
Let $L$ be a number field and let 
\begin{align*}
    E_1:\;y^2&\eq x^3+A_1x+B_1 \\
    E_2:\;y^2&\eq x^3+A_2x+B_2
\end{align*}
be two elliptic curves over $L$. Suppose that $j(E_1)=j(E_2)\notin\{0,1728\}$. Let $\gamma=\frac{A_1/B_1}{A_2/B_2}$. Then, $E_1$ and $E_2$ are isomorphic over $L(\sqrt{\gamma})$, and they are isomorphic over $L$ if and only if $\gamma\in (L^\times)^2$.
\end{lemma}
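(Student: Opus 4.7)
The plan is to reduce the problem to the standard description of isomorphisms between elliptic curves in short Weierstrass form (in characteristic different from $2$ and $3$), namely that any such isomorphism must be of the form $(x,y)\mapsto(u^2 x,u^3 y)$ for some $u\in L(\sqrt{\gamma})^\times$. Since $j(E_i)\notin\{0,1728\}$, both $A_i$ and $B_i$ are nonzero, so $\gamma$ is a well-defined element of $L^\times$. Pulling back the Weierstrass equation of $E_2$ along such a map, I see that $(x,y)\mapsto(u^2 x,u^3 y)$ sends $E_1$ to $E_2$ if and only if
\begin{align*}
    u^4 A_1 \eq A_2 \quad\text{and}\quad u^6 B_1 \eq B_2.
\end{align*}

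The next step is to observe that any such $u$ must satisfy $u^2=u^6/u^4=\frac{B_2/B_1}{A_2/A_1}=\gamma$. Conversely, I need to show that setting $u^2=\gamma$ (which requires passing to $L(\sqrt{\gamma})$) is consistent with both equations. Here the key input is the hypothesis $j(E_1)=j(E_2)$: using the formula $j=1728\cdot\frac{4A^3}{4A^3+27B^2}$ and cross-multiplying, this hypothesis is equivalent to the identity $A_1^3 B_2^2 = A_2^3 B_1^2$. A direct manipulation then yields
\begin{align*}
    \gamma^2 \eq \frac{A_1^2 B_2^2}{A_2^2 B_1^2} \eq \frac{A_2}{A_1}, \qquad \gamma^3 \eq \frac{A_1^3 B_2^3}{A_2^3 B_1^3} \eq \frac{B_2}{B_1},
\end{align*}
so any square root $u$ of $\gamma$ in an extension of $L$ automatically satisfies $u^4=A_2/A_1$ and $u^6=B_2/B_1$. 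Hence $(x,y)\mapsto(u^2x,u^3y)$ gives an isomorphism $E_1\simarrow E_2$ defined over $L(\sqrt{\gamma})$.

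For the second assertion, the forward direction is immediate since an $L$-isomorphism provides a $u\in L^\times$ with $u^2=\gamma$. For the backward direction, if $\gamma\in(L^\times)^2$ then $L(\sqrt{\gamma})=L$ and the isomorphism above is already defined over $L$. The only real obstacle is the bookkeeping to justify that every isomorphism of short Weierstrass models is of the simple form $(u^2 x,u^3 y)$ without any translation or shearing terms; this is standard (the general admissible change of variables $(x,y)\mapsto(u^2 x+r,u^3 y+su^2 x+t)$ is forced to have $r=s=t=0$ by comparison of the coefficients of $x^2$, $xy$, and $y$), and I would simply cite Silverman's \emph{Arithmetic of Elliptic Curves}, Chapter III, for this.
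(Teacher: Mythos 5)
Your proof is correct and follows essentially the same route as the paper: both derive the identity $A_1^3B_2^2=A_2^3B_1^2$ from $j(E_1)=j(E_2)$ and exhibit the explicit scaling isomorphism $(x,y)\mapsto(u^2x,u^3y)$ with $u^2=\gamma$ defined over $L(\sqrt{\gamma})$. You additionally spell out the second assertion (which the paper delegates to the cited exercise in Silverman) by invoking the standard fact that isomorphisms of short Weierstrass models have no translation terms; that is a correct and complete way to finish.
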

\begin{proof}
This is exactly \cite[Chapter~X, Exercise~10.21, p.~360]{silverman-elliptic-curve}. We will give a proof of the first part, which is essentially the same proof of \cite[Chapter~III, Proposition~1.4~(b), p.~47]{silverman-elliptic-curve}.

Since $j(E_1)=j(E_2)$, we have
\begin{align*}
    \frac{(4A_1)^3}{4A_1^3+27B_1^2}\eq\frac{(4A_2)^3}{4A_2^3+27B_2^2},
\end{align*}
and hence
\begin{align*}
    (A_1/A_2)^3\eq(B_1/B_2)^2.
\end{align*}
Here all $A_1,A_2,B_1,B_2$ are nonzero since $j(E_1)=j(E_2)\notin\{0,1728\}$. Thus, over $L(\sqrt{\gamma})$,
\begin{align*}
    E_1&\simarrow E_2 \\
    (x,y)&\longmapsto \big((A_2/A_1)^{1/2}x,(B_2/B_1)^{1/2}y\big).
\end{align*}
\end{proof}

\begin{lemma}
\label{same-jinv-square-equal}
Let $L$ be a number field and $E_1,E_2$ be two elliptic curves over $L$. Suppose that $j(E_1)=j(E_2)\notin\{0,1728\}$. Then, $a_\fr{p}(E_1)^2=a_\fr{p}(E_2)^2$ for all good primes $\fr{p}$ of both $E_1$ and~$E_2$.
\end{lemma}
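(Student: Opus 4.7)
The plan is to use \Cref{isom-over-quad} to realize $E_2$ as a quadratic twist of $E_1$, and to conclude via the standard twist formula for $a_\fr{p}$. Writing $E_i:y^2=x^3+A_ix+B_i$ and $\gamma=(A_1/B_1)/(A_2/B_2)\in L^\times$, \Cref{isom-over-quad} yields $E_1\cong E_2$ over $K\deq L(\sqrt\gamma)$. If $\gamma\in(L^\times)^2$, then $E_1\cong E_2$ already over $L$ and $a_\fr{p}(E_1)=a_\fr{p}(E_2)$ is immediate. So assume $\gamma\notin(L^\times)^2$, making $K/L$ a genuine quadratic extension.

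In this case it suffices to establish
\begin{align*}
    a_\fr{p}(E_2) \eq \chi(\Frob_\fr{p})\cdot a_\fr{p}(E_1),
\end{align*}
where $\chi\colon\Gal(\ov{L}/L)\to\{\pm 1\}$ is the quadratic character cut out by $K/L$, because squaring and using $\chi(\Frob_\fr{p})\in\{\pm 1\}$ then yields the lemma. To prove this, I would fix a rational prime $\ell$ coprime to the residue characteristic of $\fr{p}$, let $\rho_i\colon\Gal(\ov{L}/L)\to\GL_2(\Q_\ell)$ be the Galois representation on $V_\ell(E_i)$, and note that the isomorphism $E_1\cong E_2$ over $K$ gives $\rho_1|_{\Gal(\ov{L}/K)}\cong\rho_2|_{\Gal(\ov{L}/K)}$, which upgrades to $\rho_2\cong\rho_1\otimes\chi$. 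Taking traces at $\Frob_\fr{p}$ then gives the displayed identity.

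The one step requiring real care is verifying that $\chi(\Frob_\fr{p})$ is well-defined, i.e.\ that $\chi$ is unramified at $\fr{p}$. This is forced by the good reduction hypothesis: since $\rho_1$ and $\rho_2$ are both unramified at $\fr{p}$, comparing the inertia actions in $\rho_2\cong\rho_1\otimes\chi$ shows that $\chi$ must also be unramified at $\fr{p}$. An alternative, more elementary route that avoids $\ell$-adic representations is to reduce the two Weierstrass equations modulo $\fr{p}$, after first replacing $\gamma$ by $\gamma\cdot u^2$ for a suitable $u\in L^\times$ to arrange $v_\fr{p}(\gamma)=0$, and then establish the twist formula through a direct character sum manipulation using the substitution $x\mapsto\gamma x$ and multiplicativity of the Legendre symbol.
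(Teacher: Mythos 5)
Your proposal is correct, but it takes a genuinely different route from the paper. You prove the stronger pointwise identity $a_\fr{p}(E_2)=\chi(\Frob_\fr{p})\,a_\fr{p}(E_1)$ via the quadratic-twist relation $\rho_2\cong\rho_1\otimes\chi$ and then square. The paper instead uses only the weaker output of \Cref{isom-over-quad}, namely $\rho_1|_{\Gal(\ov{L}/L')}\cong\rho_2|_{\Gal(\ov{L}/L')}$ for a quadratic extension $L'/L$: it observes that $\Frob_\fr{p}^2\in\Gal(\ov{L}/L')$ and applies the $2\times2$ trace identity $\tr(g)^2=\tr(g^2)+2\det(g)$ together with $\det(\rho_i(\Frob_\fr{p}))=\N(\fr{p})$ to compare the squared traces directly. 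That shortcut sidesteps both of the points you flag as delicate: it never needs $\chi$ to be unramified at $\fr{p}$, and it never needs the ``upgrade'' from $\rho_1|_{\Gal(\ov{L}/K)}\cong\rho_2|_{\Gal(\ov{L}/K)}$ to $\rho_2\cong\rho_1\otimes\chi$. On that upgrade: it is a true and standard fact here, but it is not formal from the restriction isomorphism alone (for a reducible restriction one could not conclude); one either invokes irreducibility or, more robustly, uses the explicit isomorphism $\phi\colon E_1\to E_2$ over $K$ and the fact that $\Aut(E_1)=\{\pm1\}$ (this is where $j\neq 0,1728$ enters) to check that $g\mapsto\phi^{-1}\circ{}^g\phi$ is precisely the cocycle $\chi$. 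If you keep your route, spell that out. What your approach buys is the sign: the finer relation $a_\fr{P}(E_0')=\chi(\fr{P})a_\fr{P}(E_1)$ is exactly what the paper needs later in the proof of \Cref{ap(E0)-ap(E1)}, so your argument proves more than this lemma requires but is not wasted in the broader context.
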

\begin{proof}
Fix a good prime $\fr{p}$ of both $E_1$ and $E_2$ and fix a rational prime $\ell$ with $\fr{p}\nmid\ell$. Let $\rho_i:\Gal(\ov{L}/L)\rightarrow\GL_2(\Q_\ell)$ be the $\ell$-adic Galois representation attached to $E_i$. By \Cref{isom-over-quad}, there exists a quadratic extension $L'/L$ such that $\rho_1|_{\Gal(\ov{L}/L')}\cong\rho_2|_{\Gal(\ov{L}/L')}$. As $\Frob_\fr{p}^2\in\Gal(\ov{L}/L')$ since $L'/L$ is quadratic, we thus have
\begin{align*}
    \tr(\rho_1(\Frob_\fr{p}))^2\eq\tr(\rho_1(\Frob_\fr{p}^2))+2\det(\rho_1(\Frob_\fr{p}))&\eq\tr(\rho_2(\Frob_\fr{p}^2))+2\det(\rho_2(\Frob_\fr{p})) \\
    &\eq\tr(\rho_2(\Frob_\fr{p}))^2,
\end{align*}
where we also use that $\det(\rho_i(\Frob_\fr{p}))=\N(\fr{p})$. The result now follows from the fact that $a_\fr{p}(E_i)=\tr(\rho_i(\Frob_\fr{p}))$.
\end{proof}

\section{Reinterpretation of $a_\fr{p}(E)^2$}

Now, we will do a first reinterpretation of $a_\fr{p}(E)^2$. Since $j(E)\notin\{0,1728\}$, by \Cref{same-jinv-square-equal}, we may replace $E/L$ by a different elliptic curve with the same $j$-invariant. Let
\begin{align*}
    j_0:=j(E)\in L-\{0,1728\}\quad\text{ and }\quad z_0=1-1728/j_0\thin(\in L-\{0,1\}).
\end{align*}
Consider
\begin{alignat*}{2}
    E_0/L:&\hspace{1.2cm}y^2&&\eq x^3-\frac{1}{48z_0^3}x+\frac{1}{864z_0^4} \\
    E_1/L(\sqrt{z_0}):&\hspace{0.2cm}y^2+xy&&\eq x^3-\frac{1-\sqrt{z_0}}{864}.
\end{alignat*}
It is easy to check that $j(E_0)=j(E_1)=j_0$, $\disc(E_0)=j_0^8\cdot(j_0-1728)^{-9}$, and $\disc(E_1)=j_0^{-1}$. We will thus replace $E$ in \Cref{congruence-theorem}  by $E_0$ and give a reinterpretation of $a_\fr{p}(E_0)^2$.

\begin{lemma}
\label{E_0-E_1-isom}
The two elliptic curves $E_0$ and $E_1$ are isomorphic over $L(\sqrt[4]{z_0})$, and they are isomorphic over $L(\sqrt{z_0})$ if and only if $L(\sqrt{z_0})=L(\sqrt[4]{z_0})$.
\end{lemma}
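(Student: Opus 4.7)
The plan is to put $E_1$ into short Weierstrass form and then directly apply \Cref{isom-over-quad} with base field $L(\sqrt{z_0})$, over which both $E_0$ and $E_1$ are defined. To convert $E_1$, I would first substitute $y\mapsto y-x/2$ to kill the $xy$-term, yielding $y^2=x^3+x^2/4-(1-\sqrt{z_0})/864$, and then translate $x\mapsto x-1/12$ to remove the quadratic term in $x$. A direct expansion (the one tedious but entirely mechanical step) should produce the short Weierstrass form
\begin{align*}
    y^2 \eq x^3-\tfrac{1}{48}x+\tfrac{\sqrt{z_0}}{864},
\end{align*}
so $A_1=-1/48$ and $B_1=\sqrt{z_0}/864$.

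With both curves now in short Weierstrass form, I would compute the ratio $\gamma$ from \Cref{isom-over-quad}. From $A_0=-1/(48z_0^3)$ and $B_0=1/(864z_0^4)$ one gets $A_0/B_0=-18z_0$, while from the reduction above $A_1/B_1=-18/\sqrt{z_0}$, so
\begin{align*}
    \gamma\eq\frac{A_0/B_0}{A_1/B_1}\eq z_0^{3/2}\;\in\;L(\sqrt{z_0})^\times.
\end{align*}

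Finally, I would identify $L(\sqrt{z_0})(\sqrt{\gamma})$. Writing $w=\sqrt{z_0}$, we have $\sqrt{\gamma}=w\sqrt{w}$, so adjoining $\sqrt{\gamma}$ to $L(\sqrt{z_0})$ is the same as adjoining $\sqrt{w}=\sqrt[4]{z_0}$; hence $L(\sqrt{z_0})(\sqrt{\gamma})=L(\sqrt[4]{z_0})$. The first assertion of the lemma is then immediate from \Cref{isom-over-quad}. For the second, \Cref{isom-over-quad} (applied over $L(\sqrt{z_0})$) says that $E_0\cong E_1$ over $L(\sqrt{z_0})$ if and only if $\gamma\in(L(\sqrt{z_0})^\times)^2$, i.e.\ $\sqrt[4]{z_0}\in L(\sqrt{z_0})$, which together with the trivial inclusion $L(\sqrt{z_0})\subseteq L(\sqrt[4]{z_0})$ is equivalent to $L(\sqrt{z_0})=L(\sqrt[4]{z_0})$. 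The only real obstacle is the bookkeeping in the Weierstrass reduction; everything after that is formal.
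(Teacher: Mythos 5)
Your proof is correct and follows essentially the same route as the paper: the same two substitutions putting $E_1$ into the short Weierstrass form $y^2=x^3-\tfrac{1}{48}x+\tfrac{\sqrt{z_0}}{864}$, the same computation $\gamma=z_0^{3/2}$, and the same application of \Cref{isom-over-quad} over $L(\sqrt{z_0})$. The identification $L(\sqrt{z_0})(\sqrt{\gamma})=L(\sqrt[4]{z_0})$ and the equivalence for the second assertion are exactly as in the paper.
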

\begin{proof}
Writing $(x_1,y_1)=(x,y+\frac{1}{2}x)$, we obtain that $E_1$ is isomorphic to
\begin{align*}
    y_1^2\eq x_1^3+\frac{1}{4}x_1^2-\frac{1-\sqrt{z_0}}{864}.
\end{align*}
Writing $(x_2,y_2)=(x_1+\frac{1}{12},y_1)$, we obtain that $E_1$ is isomorphic to
\begin{align*}
    E_1':y_2^2\eq x_2^3-\frac{1}{48}x_2+\frac{\sqrt{z_0}}{864}.
\end{align*}
Now, one computes that
\begin{align*}
    \frac{-1/(48z_0^3)}{1/(864z_0^4)}\cdot\frac{\sqrt{z_0}/864}{-1/48}\eq z_0^{3/2}
\end{align*}
and applies \Cref{isom-over-quad} to $E_0$ (over $L(\sqrt{z_0})$) and $E_1'$. The result then follows.
\end{proof}

\begin{corollary}
\label{ap(E0)-ap(E1)}
Let $\fr{p}$ be a prime of $L$ with $\fr{p}\nmid 6$ and $v_\fr{p}(j_0(j_0-1728))=0$. Let $\fr{P}$ be a prime of $L(\sqrt{z_0})$ that lies above $\fr{p}$ and let $p$ be the rational prime that lies below $\fr{p}$. Then,
\begin{align*}
    a_\fr{p}(E_0)^2\;\equiv\;\begin{cases}
        a_\fr{P}(E_1)^2 &\textit{ if $\big(\frac{z_0}{\fr{p}}\big)=1$} \\
        -\big(\frac{-1}{\fr{p}}\big)a_{\fr{P}}(E_1) &\textit{ if $\big(\frac{z_0}{\fr{p}}\big)=-1$}
    \end{cases}\pmod{p}.
\end{align*}
\end{corollary}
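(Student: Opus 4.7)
\textbf{Proof plan for \Cref{ap(E0)-ap(E1)}.}

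The plan is to split on how $\fr{p}$ behaves in $L(\sqrt{z_0})/L$. Since $v_\fr{p}(j_0(j_0-1728))=0$ forces $v_\fr{p}(z_0)=0$, the prime $\fr{p}$ should split in $L(\sqrt{z_0})$ exactly when $\big(\frac{z_0}{\fr{p}}\big)=1$ and be inert when $\big(\frac{z_0}{\fr{p}}\big)=-1$. In both cases the strategy will be the same: first relate $a_\fr{p}(E_0)^2$ to $a_\fr{P}(E_0)$ using the compatibility of Frobenius with residue-field extension, and then use the close relationship between $E_0$ and $E_1$ over $L(\sqrt{z_0})$ coming from \Cref{E_0-E_1-isom}.

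In the split case I would observe that $\F_\fr{P}=\F_\fr{p}$ and $\N(\fr{P})=\N(\fr{p})$, so counting $\F_\fr{p}$-points of $E_0$ gives $a_\fr{p}(E_0)=a_\fr{P}(E_0)$. Then \Cref{same-jinv-square-equal}, applied to $E_0$ and $E_1$ over the base field $L(\sqrt{z_0})$ (both have $j$-invariant $j_0\notin\{0,1728\}$ and good reduction at $\fr{P}$), yields $a_\fr{P}(E_0)^2=a_\fr{P}(E_1)^2$; combining the two identities gives the desired equality.

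In the inert case $\N(\fr{P})=\N(\fr{p})^2$ and $\Frob_\fr{P}=\Frob_\fr{p}^2$, so writing $\alpha,\beta$ for the Frobenius eigenvalues of $E_0$ at $\fr{p}$, I get
\begin{align*}
    a_\fr{P}(E_0)\eq\alpha^2+\beta^2\eq a_\fr{p}(E_0)^2-2\N(\fr{p})\;\equiv\;a_\fr{p}(E_0)^2\pmod{p}.
\end{align*}
It then remains to compare $a_\fr{P}(E_0)$ with $a_\fr{P}(E_1)$. From \Cref{E_0-E_1-isom} and the ratio $\gamma=z_0^{3/2}$ appearing in its proof, over $L(\sqrt{z_0})$ the curves $E_0$ and $E_1$ are quadratic twists by the class of $z_0^{3/2}\equiv\sqrt{z_0}$ modulo $(L(\sqrt{z_0})^\times)^2$. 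Translating this to Galois representations just as in the proof of \Cref{same-jinv-square-equal} should give $a_\fr{P}(E_0)=\big(\frac{\sqrt{z_0}}{\fr{P}}\big)\,a_\fr{P}(E_1)$. Setting $q=\N(\fr{p})$, the Legendre symbol will evaluate to
\begin{align*}
    \bigg(\frac{\sqrt{z_0}}{\fr{P}}\bigg)\eq z_0^{(q^2-1)/4}\eq\big(z_0^{(q-1)/2}\big)^{(q+1)/2}\eq(-1)^{(q+1)/2}\eq-\bigg(\frac{-1}{\fr{p}}\bigg),
\end{align*}
where the last step uses $(-1)^{(q+1)/2}(-1)^{(q-1)/2}=(-1)^q=-1$ for odd $q$.

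The hardest point will be the inert case: I need to correctly identify $E_0$ and $E_1$ as quadratic twists over $L(\sqrt{z_0})$ via \Cref{E_0-E_1-isom} and then carry the twisting character through the Legendre-symbol computation to produce the precise sign $-\big(\frac{-1}{\fr{p}}\big)$. Once the twist is set up correctly, the rest of the argument is routine.
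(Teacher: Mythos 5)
Your proposal is correct and follows essentially the same route as the paper: split/inert case distinction, $\Frob_\fr{P}=\Frob_\fr{p}^2$ giving $a_\fr{P}(E_0)\equiv a_\fr{p}(E_0)^2\pmod p$ in the inert case, the quadratic-twist relation between $E_0$ and $E_1$ over $L(\sqrt{z_0})$ coming from the ratio $z_0^{3/2}$, and the sign computation $z_0^{(q^2-1)/4}=(-1)^{(q+1)/2}=-\big(\tfrac{-1}{\fr{p}}\big)$. The only (harmless) difference is organizational: you package the paper's two sub-cases of the inert situation into a single twisting-character identity $a_\fr{P}(E_0)=\big(\tfrac{\sqrt{z_0}}{\fr{P}}\big)a_\fr{P}(E_1)$, which is slightly cleaner than the paper's explicit case split on whether $E_0$ and $E_1$ are already isomorphic over $L(\sqrt{z_0})$.
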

\begin{proof}
First, suppose that $\big(\frac{z_0}{\fr{p}}\big)=1$, i.e., $z_0$ is a square in $\F_\fr{p}^\times$. Then, $\F_\fr{p}=\F_\fr{P}$. By \Cref{E_0-E_1-isom}, the reduction of $E_0$ at $\fr{p}$ and the reduction of $E_1$ at $\fr{P}$ are isomorphic over $\F_\fr{p}(\sqrt[4]{z_0})$. By the same argument as in \Cref{same-jinv-square-equal}, one obtains that
\begin{align*}
    a_\fr{p}(E_0)^2\eq a_\fr{P}(E_1)^2.
\end{align*}

Now, suppose that $\big(\frac{z_0}{\fr{p}}\big)=1$, i.e., $z_0$ is not a square in $\F_\fr{p}^\times$. Then, $L\neq L(\sqrt{z_0})$, $\fr{p}=\fr{P}$ is inert in $L(\sqrt{z_0})$, and $\F_\fr{P}=\F_\fr{p}(\sqrt{z_0})$. We first take a digression and discuss when $\sqrt{z_0}$ is a square in~$\F_\fr{P}^\times$. Note that the polynomial $X^2-\sqrt{z_0}$ has a root in $\F_{\fr{P}}$ if and only if $\sqrt{z_0}^{(\N(\fr{P})-1)/2}=1$ in~$\F_{\fr{P}}$, i.e., $z_0^{(\N(\fr{p})^2-1)/4}=1$ in $\F_\fr{p}$. Since $\fr{p}$ is inert in $L(\sqrt{z_0})$, we have $z_0^{(\N(\fr{p})-1)/2}=-1$ in $\F_\fr{p}$. This means that $X^2-\sqrt{z_0}$ has a root in $\F_\fr{P}$ if and only if $(-1)^{(\N(\fr{p})+1)/2}=1$ in $\F_\fr{p}$. That is, $\sqrt{z_0}$ is a square in $\F_\fr{P}^\times$ if and only if $\big(\frac{-1}{\fr{p}}\big)=-1$.

We now return to the proof. Let $E_0'/L(\sqrt{z_0})$ be the base change of $E_0/L$ to $L(\sqrt{z_0})$. Then, by interpreting $a_\fr{p}(E_0)$ and $a_\fr{P}(E_0')$ as traces of Frobeniuses, one obtains that
\begin{align*}
    a_\fr{p}(E_0)^2\;\equiv\; a_\fr{P}(E_0')\pmod{p}.
\end{align*}
Suppose first that $E_0'$ and $E_1$ are already isomorphic over $L(\sqrt{z_0})$. Then, $L(\sqrt{z_0})=L(\sqrt[4]{z_0})$ by \Cref{E_0-E_1-isom}. In particular, $\sqrt{z_0}$ is a square in $\F_\fr{P}^\times$, so $\big(\frac{-1}{\fr{p}}\big)=-1$. Then, 
\begin{align*}
    a_\fr{p}(E_0)^2\;\equiv\;a_\fr{P}(E_0')\;\equiv\;a_\fr{P}(E_1)\;\equiv\;-\big(\tfrac{-1}{\fr{p}}\big)a_{\fr{P}}(E_1)\pmod{p}.
\end{align*}
Now, suppose that $E_0'$ and $E_1$ are not isomorphic over $L(\sqrt{z_0})$. Let $\chi$ denote the quadratic character associated to the quadratic extension $L(\sqrt[4]{z_0})/L(\sqrt{z_0})$, i.e., for a prime $\fr{Q}$ of $L(\sqrt{z_0})$ that is unramified in $L(\sqrt[4]{z_0})/L(\sqrt{z_0})$,
\begin{align*}
    \chi(\fr{Q})\eq\begin{cases}
    1&\text{ if $\fr{Q}$ splits in $L(\sqrt[4]{z_0})$,} \\
    -1&\text{ if $\fr{Q}$ is inert in $L(\sqrt[4]{z_0})$.}
    \end{cases}
\end{align*}
Since $E_0'$ and $E_1$ are isomorphic over $L(\sqrt[4]{z_0})$ but not over $L(\sqrt{z_0})$, by turning to the Galois representations of $E_0'$ and $E_1$, one obtains that
\begin{align*}
    a_\fr{P}(E_0')\eq\chi(\fr{P})a_\fr{P}(E_1).
\end{align*}
Now, by the assumptions on $\fr{p}$ and $j_0$ (and hence $z_0$), $\fr{P}$ splits in $L(\sqrt[4]{z_0})$ if and only if $\sqrt{z_0}$ is a square in $\F_\fr{P}^\times$. Combined with our digression, this implies that $\chi(\fr{P})\eq-(\tfrac{-1}{\fr{p}})$. Hence, 
\begin{align*}
    a_\fr{p}(E_0)^2\;\equiv\;a_\fr{P}(E_0')\;\equiv\;a_\fr{P}(E_1)\;\equiv\;-\big(\tfrac{-1}{\fr{p}}\big)a_{\fr{P}}(E_1)\pmod{p}.
\end{align*}
\end{proof}

To finish this section, we will give a hypergeometric interpretation of $a_\fr{P}(E_1)$. We first prove a lemma on factorials.

\begin{lemma}
\label{congruence-needed-for-ap(E1)}
Let $p\nmid 6$ be a rational prime and let $l\in\Z^+$. Then, for all $0\leq r\leq\frac{p^l-1}{6}$, 
\begin{align*}
    4^{3r}\cdot\frac{(\frac{p^l-1}{2})!}{r!(2r)!(\frac{p^l-1}{2}-3r)!}\;\equiv\;(-1)^{3r}\cdot\frac{(6r)!}{r!(2r)!(3r)!}\pmod{p}.
\end{align*}
\end{lemma}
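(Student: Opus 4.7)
I would first rewrite the factorial ratios as $\frac{N!}{r!(2r)!(N-3r)!} = \binom{N}{3r}\binom{3r}{r}$ and $\frac{(6r)!}{r!(2r)!(3r)!} = \binom{6r}{3r}\binom{3r}{r}$, where $N = (p^l-1)/2$. Since $\binom{3r}{r}$ is a common integer factor on both sides, the lemma reduces to proving
\[4^{3r}\binom{N}{3r} \equiv (-1)^{3r}\binom{6r}{3r} \pmod p.\]
The key ingredient is a digit identity, valid for $0 \leq a \leq (p-1)/2$:
\[4^a\binom{(p-1)/2}{a} \equiv (-1)^a\binom{2a}{a} \pmod p,\]
which one verifies by expanding $\binom{(p-1)/2}{a}$ as a product over $k = 0, \ldots, a-1$ and using $(p-1)/2 - k \equiv -(2k+1)/2 \pmod p$ to identify the product with $(-1)^a(2a)!/(4^a(a!)^2)$.

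The base-$p$ digits of $N$ are all equal to $(p-1)/2$, so Lucas's theorem gives $\binom{N}{3r} \equiv \prod_{i=0}^{l-1}\binom{(p-1)/2}{d_i} \pmod p$, where $d_i$ denotes the $i$-th base-$p$ digit of $3r$. I would split into two cases depending on whether all $d_i \leq (p-1)/2$. In the small-digit case, $2d_i \leq p-1$ ensures that no carries arise when computing $6r = 2 \cdot 3r$ in base $p$; hence the digits of $6r$ are exactly $2d_i$, and Lucas gives $\binom{6r}{3r} \equiv \prod_i \binom{2d_i}{d_i} \pmod p$. Applying the digit identity factor by factor and using that $\sum_i d_i \equiv 3r \pmod{p-1}$ (which lets one replace $4^{\sum d_i}$ by $4^{3r}$ and, since $p-1$ is even, $(-1)^{\sum d_i}$ by $(-1)^{3r}$) yields the reduced congruence.

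In the large-digit case, where some $d_i \geq (p+1)/2$, both sides vanish modulo $p$: the factor $\binom{(p-1)/2}{d_i} = 0$ as an integer forces $\binom{N}{3r} \equiv 0 \pmod p$, while simultaneously $2d_i \geq p+1$ guarantees a carry at position $i$ when adding $3r + 3r$, so Kummer's theorem gives $v_p(\binom{6r}{3r}) \geq 1$. The main obstacle is precisely this case distinction: the digit identity is only informative when $a \leq (p-1)/2$, so one must verify the fortunate alignment that whenever the LHS binomial $\binom{(p-1)/2}{d_i}$ vanishes, the corresponding RHS binomial $\binom{6r}{3r}$ also vanishes mod $p$ through the independent mechanism of a forced carry in doubling.
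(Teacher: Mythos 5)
Your argument is correct, but it is genuinely different from the paper's. The paper works directly with the products: it writes $4^{3r}\,(\tfrac{p^l-1}{2})!/(\tfrac{p^l-1}{2}-3r)! = 2^{3r}\prod_{i=1}^{3r}(p^l-6r+2i-1)$ and $(-1)^{3r}(6r)!/(3r)! = 2^{3r}\prod_{i=1}^{3r}(-6r+2i-1)$, observes that corresponding factors differ by $p^l$ and hence have equal $p$-adic valuation $\lambda_i<l$ with congruent unit parts, and then divides out $r!(2r)!$ at the end using the integrality of $\frac{(6r)!}{r!(2r)!(3r)!}$ to control the valuation. This is uniform (no case split) and in fact yields the congruence modulo $p^{1+\lambda-\mu}$, a slightly stronger statement. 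Your route instead factors out $\binom{3r}{r}$, reduces to $4^{3r}\binom{N}{3r}\equiv(-1)^{3r}\binom{6r}{3r}$ with $N=\frac{p^l-1}{2}$, and runs Lucas's theorem against the digit identity $4^a\binom{(p-1)/2}{a}\equiv(-1)^a\binom{2a}{a}$, using $\sum_i d_i\equiv 3r\pmod{p-1}$ to recover the exponents; the price is the case analysis, and you correctly identify and resolve the one delicate point, namely that whenever some digit $d_i>\frac{p-1}{2}$ kills $\binom{N}{3r}$, Kummer's theorem (a carry in $3r+3r$ at position $i$) simultaneously kills $\binom{6r}{3r}$ mod $p$. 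Both proofs are complete; yours is more conceptual and makes the vanishing mechanism transparent, while the paper's is more elementary, avoids Lucas and Kummer entirely, and gives a marginally sharper modulus for free.
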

\begin{proof}
Note that
\begin{align*}
    4^{3r}\cdot\frac{(\frac{p^l-1}{2})!}{(\frac{p^l-1}{2}-3r)!}\eq 2^{3r}\cdot\prod_{i=1}^{3r}(p^l-6r+2i-1)
\end{align*}
and that
\begin{align*}
    (-1)^{3r}\cdot\frac{(6r)!}{(3r)!}\eq(-2)^{3r}\cdot\frac{(6r)!}{\prod_{i=1}^{3r}(2i)}\eq 2^{3r}\cdot\prod_{i=1}^{3r}(-6r+2i-1).
\end{align*}
For each $1\leq i\leq 3r\leq \frac{p^l-1}{2}$, we have $6r-2i+1\leq p^l-1$, so
\begin{align*}
    v_p(-6r+2i-1)\eq v_p(p^l-6r+2i-1)\;=:\;\lambda_i\;<\;l
\end{align*}
and hence
\begin{align*}
    \frac{-6r+2i-1}{p^{\lambda_i}}\;\equiv\;\frac{p^l-6r+2i-1}{p^{\lambda_i}}\pmod{p}.
\end{align*}
In particular,
\begin{align*}
    \prod_{i=1}^{3r}(p^l-6r+2i-1)&\eq p^{\sum_{i=1}^{3r}\lambda_i}\cdot\prod_{i=1}^{3r}\frac{p^l-6r+2i-1}{p^{\lambda_i}} \\
    &\;\equiv\;p^{\sum_{i=1}^{3r}\lambda_i}\cdot\prod_{i=1}^{3r}\frac{-6r+2i-1}{p^{\lambda_i}}\eq\prod_{i=1}^{3r}(-6r+2i-1)\pmod{p^{1+\sum_{i=1}^{3r}\lambda_i}}.
\end{align*}
Write
\begin{align*}
    \lambda\eq v_p\bigg(4^{3r}\cdot\frac{(\frac{p^l-1}{2})!}{(\frac{p^l-1}{2}-3r)!}\bigg)\eq\sum_{i=1}^{3r}\lambda_i\eq v_p\bigg((-1)^{3r}\cdot\frac{(6r)!}{(3r)!}\bigg)
\end{align*}
and
\begin{align*}
    \mu\eq v_p(r!(2r)!).
\end{align*}
Then, $\mu\leq\lambda$ (e.g. by the integrality of $\frac{(6r)!}{r!(2r)!(3r)!}$) and
\begin{align*}
    4^{3r}\cdot\frac{(\frac{p^l-1}{2})!}{r!(2r)!(\frac{p^l-1}{2}-3r)!}\;\equiv\;(-1)^{3r}\cdot\frac{(6r)!}{r!(2r)!(3r)!}\pmod{p^{1+\lambda-\mu}}.
\end{align*}
The result then follows.
\end{proof}

\begin{proposition}
\label{ap(E1)}
Let $\fr{p}$ be a prime of $L$ with $\fr{p}\nmid 6$ and $v_\fr{p}(j_0(j_0-1728))=0$. Let $\fr{P}$ be a prime of $L(\sqrt{z_0})$ that lies above $\fr{p}$. Then,
\begin{align*}
    a_\fr{P}(E_1)\;\equiv\;\pFq{2}{1}{\frac{1}{6},\frac{5}{6}}{1}{\frac{1-\sqrt{z_0}}{2}}_{\lfloor\frac{\N(\fr{P})-1}{6}\rfloor}\pmod{\fr{P}}.
\end{align*}
\end{proposition}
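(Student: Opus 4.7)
The plan is to compute $a_\fr{P}(E_1) \bmod \fr{P}$ directly by counting points, using the standard character-sum formula, and then match the resulting expression to the hypergeometric sum via the lemma on factorials just proved.

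First I would put $E_1$ into short Weierstrass form over $\F_\fr{P}$ (noting $\fr{P}\nmid 2$ since $\fr{p}\nmid 6$): completing the square by $u = y + x/2$ turns $y^2+xy = x^3 - (1-\sqrt{z_0})/864$ into
\[
 u^2 \eq f(x), \qquad f(x) \deq x^3 + \tfrac{1}{4}x^2 - \tfrac{1-\sqrt{z_0}}{864}.
\]
Set $N = \N(\fr{P})$ and $s = (N-1)/2$. The usual point count gives $|E_1(\F_\fr{P})| = 1 + N + \sum_{x\in\F_\fr{P}}\bigl(\tfrac{f(x)}{\fr{P}}\bigr)$, hence
\[
 a_\fr{P}(E_1) \eq -\sum_{x\in\F_\fr{P}}\Bigl(\tfrac{f(x)}{\fr{P}}\Bigr) \;\equiv\; -\sum_{x\in\F_\fr{P}} f(x)^{s} \pmod{\fr{P}}.
\]

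Next I would expand $f(x)^{s}$ by the multinomial theorem with indices $(a,b,d)$, $a+b+d=s$, and use the orthogonality relation $\sum_{x\in\F_\fr{P}} x^k \equiv -1 \pmod{\fr{P}}$ when $k$ is a positive multiple of $N-1$, and $0$ otherwise. The surviving terms are those with $3a+2b$ a positive multiple of $2s$, and since $3a+2b \le 3s$ the only possibility is $3a+2b = 2s$. Together with $a+b+d=s$ this forces $a = 2r$, $b = s-3r$, $d = r$ for $0 \le r \le \lfloor s/3\rfloor = \lfloor (N-1)/6\rfloor$. After this discrete bookkeeping, writing $c = -(1-\sqrt{z_0})/864$, one obtains
\[
 a_\fr{P}(E_1) \;\equiv\; \frac{1}{4^{s}}\sum_{r=0}^{\lfloor (N-1)/6\rfloor} 4^{3r}\cdot\frac{s!}{(2r)!\,(s-3r)!\,r!}\cdot(-1)^{r}\cdot\frac{(1-\sqrt{z_0})^{r}}{864^{r}} \pmod{\fr{P}}.
\]

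The third step is to invoke \Cref{congruence-needed-for-ap(E1)} (with $p^{l} = N$), which replaces the factorial ratio and the factor $4^{3r}$ by $(-1)^{3r}\cdot(6r)!/(r!(2r)!(3r)!)$; the signs $(-1)^{3r}(-1)^{r}$ combine to $1$. Then I would use the classical factorial identity
\[
 \frac{(6r)!}{(2r)!(3r)!r!} \eq 432^{r}\cdot\frac{(1/6)_{r}(5/6)_{r}}{(r!)^{2}},
\]
which follows from the Gauss multiplication formula applied to $(1/6)_r(5/6)_r$ (or a direct check by grouping the factors of $(6r)!$ by residue modulo $6$). Since $432/864 = 1/2$, this yields
\[
 a_\fr{P}(E_1) \;\equiv\; \frac{1}{4^{s}}\cdot\pFq{2}{1}{\tfrac{1}{6},\tfrac{5}{6}}{1}{\tfrac{1-\sqrt{z_0}}{2}}_{\lfloor (N-1)/6\rfloor}\pmod{\fr{P}}.
\]
Finally, $4^{s} = 2^{N-1} \equiv 1 \pmod{\fr{P}}$ by Fermat, so the prefactor disappears and the claim follows.

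I expect the only delicate step to be the combinatorial one: isolating $3a+2b = 2s$ as the unique surviving contribution and verifying that the index $r$ ranges over precisely $0 \le r \le \lfloor (N-1)/6 \rfloor$, so that the cutoff matches the truncation in the proposition. The factorial identity connecting $(6r)!/(r!(2r)!(3r)!)$ to $(1/6)_{r}(5/6)_{r}/(r!)^{2}$ is classical, but worth stating explicitly since it is exactly what converts the combinatorial sum into the ${}_2F_1$ truncation.
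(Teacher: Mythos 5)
Your proposal is correct and follows essentially the same route as the paper's proof: the same completion of the square, the same character-sum expansion of $-\sum_x f(x)^{(\N(\fr{P})-1)/2}$ with the multinomial/orthogonality bookkeeping forcing $3a+2b=\N(\fr{P})-1$, and the same appeal to \Cref{congruence-needed-for-ap(E1)} together with the identity $\frac{(1/6)_r(5/6)_r}{(r!)^2}=432^{-r}\frac{(6r)!}{r!(2r)!(3r)!}$. The only (harmless) difference is that you carry the prefactor $4^{-(\N(\fr{P})-1)/2}$ explicitly and discharge it by Fermat at the end, which the paper absorbs silently.
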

\begin{proof}
Let $p$ be the rational prime that lies below $\fr{p}$ (and $\fr{P}$). Note that
\begin{align*}
    |E_1(\F_\fr{P})|\eq 1+\sum_{x\in\F_\fr{P}}\bigg(\bigg(\frac{x^3+\frac{1}{4}x^2-\frac{1-\sqrt{z_0}}{864}}{\fr{P}}\bigg)+1\bigg),
\end{align*}
so
\begin{align*}
    a_\fr{P}(E_1)\eq \N(\fr{P})+1-|E_1(\F_\fr{P})|
    &\eq-\sum_{x\in\F_\fr{P}}\bigg(\frac{x^3+\frac{1}{4}x^2-\frac{1-\sqrt{z_0}}{864}}{\fr{P}}\bigg) \\
    &\;\equiv\;-\sum_{x\in\F_\fr{P}}\bigg(x^3+\frac{1}{4}x^2-\frac{1-\sqrt{z_0}}{864}\bigg)^{\frac{\N(\fr{P})-1}{2}}\pmod{\fr{P}}.
\end{align*}
Since $\sum_{x\in\F_\fr{P}}x^i\equiv\begin{cases}0&\text{ if }(\N(\fr{P})-1)\nmid i\text{ or }i=0 \\ -1&\text{ if }(\N(\fr{P})-1)\mid i\text{ and }i\neq 0\end{cases}$, we obtain that
\begin{align*}
    a_\fr{P}(E_1)&\;\equiv\;\sum_{\substack{r_3+r_2+r_0=(\N(\fr{P})-1)/2 \\ 3r_3+2r_2=\N(\fr{P})-1}}\binom{\frac{\N(\fr{P})-1}{2}}{r_3, r_2, r_0}\bigg(\frac{1}{4}\bigg)^{r_2}\bigg(-\frac{1-\sqrt{z_0}}{864}\bigg)^{r_0}\pmod{\fr{P}}.
\end{align*}
By the constraints, it is easy to see that $r_3=2r_0$ and $r_2=\frac{\N(\fr{P})-1}{2}-3r_0$. Hence,
\begin{align*}
    a_\fr{P}(E_1)&\;\equiv\;\sum_{r=0}^{\lfloor\frac{\N(\fr{P})-1}{6}\rfloor}4^{3r}\cdot\binom{\frac{\N(\fr{P})-1}{2}}{r, 2r, \frac{\N(\fr{P})-1}{2}-3r}\cdot \bigg(-\frac{1-\sqrt{z_0}}{864}\bigg)^r\pmod{\fr{P}}.
\end{align*}
The result now follows from \Cref{congruence-needed-for-ap(E1)} and the fact that $\frac{(\frac{1}{6})_r(\frac{5}{6})_r}{(r!)^2}\eq 432^{-r}\frac{(6r)!}{r!(2r)!(3r)!}$.
\end{proof}

\section{The case when $\big(\frac{1-1728/j_0}{\fr{p}}\big)=1$}

In this section, we will prove \Cref{congruence-theorem} when $\big(\frac{1-1728/j_0}{\fr{p}}\big)=1$, i.e., when $z_0=1-1728/j_0$ is a square in $\F_\fr{p}^\times$. We first start by showing some congruences of factorials and truncated hypergeometric sums that will be needed for the proof.

\begin{lemma}
\label{2f1-divisible-by-p}
Let $p\nmid 6$ be a rational prime and $l\in\Z^+$. Then, for $\frac{p^l-1}{6}<r\leq p^l-1$,
\begin{align}
\label{2f1-divisible-by-p-eqn}
    \frac{(\frac{1}{6})_r(\frac{5}{6})_r}{(r!)^2}\eq 432^{-r}\frac{(6r)!}{r!(2r)!(3r)!}\;\equiv\;0\pmod{p}.
\end{align}
\end{lemma}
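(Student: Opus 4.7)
The plan is to reduce the claim to a single-variable $p$-adic valuation computation via Legendre's formula. The identity
\[
\frac{(1/6)_r(5/6)_r}{(r!)^2}=432^{-r}\frac{(6r)!}{r!(2r)!(3r)!}
\]
(already invoked in the proof of \Cref{ap(E1)}, and a consequence of the Gauss multiplication formula for $\Gamma$) reduces the problem, in view of $p\nmid 432$, to showing
\[
v_p\!\left(\frac{(6r)!}{r!\,(2r)!\,(3r)!}\right)\ge 1\quad\text{whenever}\quad \tfrac{p^l-1}{6}<r\le p^l-1.
\]

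Next, I would apply Legendre's formula $v_p(n!)=\sum_{k\ge 1}\lfloor n/p^k\rfloor$ to rewrite the left-hand side as $\sum_{k\ge 1}\bigl(\lfloor 6r/p^k\rfloor-\lfloor r/p^k\rfloor-\lfloor 2r/p^k\rfloor-\lfloor 3r/p^k\rfloor\bigr)$. Writing $r=q_kp^k+s_k$ with $0\le s_k<p^k$, the identity $6=1+2+3$ makes the contributions of $q_k$ cancel in each summand, leaving $\sum_{k\ge 1}g(s_k/p^k)$, where $g(x):=\lfloor 6x\rfloor-\lfloor 2x\rfloor-\lfloor 3x\rfloor$ on $[0,1)$. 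A quick case check on the six subintervals of $[0,1)$ bounded by multiples of $1/6$ gives $g(0)=0$, $g\equiv 1$ on $[1/6,5/6)$, and $g\equiv 2$ on $[5/6,1)$; in particular every summand is non-negative, and $g(x)\ge 1$ precisely when $x\ge 1/6$.

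To finish I would focus on the level $k=l$: since $r<p^l$, we have $s_l=r$, and the hypothesis $r>(p^l-1)/6$ combined with $r\in\Z$ forces $6r\ge p^l$, i.e., $s_l/p^l\ge 1/6$, yielding $g(s_l/p^l)\ge 1$. Together with the non-negativity of all other terms, this gives the required bound $v_p\ge 1$. There is no serious obstacle here; the only step requiring any care is the piecewise computation of $g$, which is entirely mechanical, and the key conceptual point is simply that the hypothesis on $r$ is calibrated exactly so that the top-level digit contribution $g(r/p^l)$ is non-zero.
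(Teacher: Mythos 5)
Your argument is correct, but it takes a genuinely different route from the paper's. You pass to the factorial-ratio form $432^{-r}\,\frac{(6r)!}{r!\,(2r)!\,(3r)!}$ (legitimate, since $p\nmid 6$ implies $p\nmid 432$) and estimate its $p$-adic valuation by Legendre's formula, reducing everything to the sawtooth function $g(x)=\lfloor 6x\rfloor-\lfloor 2x\rfloor-\lfloor 3x\rfloor$ on $[0,1)$ (the omitted term $-\lfloor s_k/p^k\rfloor$ vanishes since $0\le s_k<p^k$, so this is the right function): every digit-level contribution is non-negative, and the hypothesis $r>(p^l-1)/6$ forces $6r\ge p^l$, hence a contribution of at least $1$ at level $k=l$. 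The paper instead works directly with the Pochhammer quotients: it computes $v_p\big((\tfrac16)_r\big)$ as $\sum_s\hash\{0\le i\le r-1 : \tfrac16+i\equiv 0\ (\mathrm{mod}\ p^s)\}$, notes that when $p^l\equiv 1\pmod 6$ the hypothesis on $r$ guarantees an extra hit at level $s=l$ while $v_p(r!)=\sum_{s=1}^{l-1}\lfloor r/p^s\rfloor$ sees nothing at that level, and concludes $v_p\big((\tfrac16)_r/r!\big)\ge 1$; the case $p^l\equiv 5\pmod 6$ is handled symmetrically via $(\tfrac56)_r$. Both arguments isolate the same level-$l$ digit phenomenon. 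Yours avoids the case split on $p^l\bmod 6$ by treating the symmetric product at once and is the standard ``integrality of factorial ratios'' computation; the paper's version identifies which individual factor $(\tfrac16)_r/r!$ or $(\tfrac56)_r/r!$ acquires the factor of $p$ (slightly more information than the lemma asserts) and matches in style the Pochhammer-symbol manipulations reused later in \Cref{pochhammer-over-factorial-congruence}. I see no gap in your proposal.
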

\begin{proof}
Suppose first that $p^l\equiv 1\pmod{6}$. Consider the term $\big(\frac{1}{6}\big)_r/r!$. Note that
\begin{align*}
    v_p\big(\big(\tfrac{1}{6}\big)_r\big)\eq\sum_{s=1}^\infty\hash\big\{0\leq i\leq r-1\;\big|\;\tfrac{1}{6}+i\equiv 0\pmod{p^s}\big\}.
\end{align*}
As $\frac{p^l-1}{6}<r\leq p^l-1$ and $\frac{1}{6}+\frac{p^l-1}{6}\equiv 0\pmod{p^l}$,
\begin{align*}
    \hash\big\{0\leq i\leq r-1\;\big|\;\tfrac{1}{6}+i\equiv 0\pmod{p^l}\big\}\;\geq\;1,
\end{align*}
so
\begin{align*}
    v_p(r!)\eq\sum_{s=1}^{l-1}\bigg\lfloor\frac{r}{p^s}\bigg\rfloor\;\leq\; \sum_{s=1}^{l-1}\hash\big\{0\leq i\leq r-1\;\big|\;\tfrac{1}{6}+i\equiv 0\pmod{p^s}\big\}\;<\;v_p\big(\big(\tfrac{1}{6}\big)_r\big).
\end{align*}
Hence, $v_p\big(\big(\frac{1}{6}\big)_r/r!\big)\geq 1$. Now, suppose that $p^l\equiv 5\pmod{6}$. Then, a similar argument shows that $v_p\big(\big(\frac{5}{6}\big)_r/r!\big)\geq 1$. The result then follows.
\end{proof}

\begin{lemma}
\label{3f2-divisible-by-p}
Let $p\nmid 6$ be a rational prime and $l\in\Z^+$. Then, for $\frac{p^l-1}{6}<r\leq p^l-1$,
\begin{align}
\label{3f2-divisible-by-p-eqn}
    \frac{(\frac{1}{2})_r(\frac{1}{6})_r(\frac{5}{6})_r}{(r!)^3}\eq 1728^{-r}\frac{(6r)!}{(3r)!(r!)^3}\;\equiv\;0\pmod{p}.
\end{align}
\end{lemma}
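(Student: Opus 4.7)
The plan is to reduce the lemma to \Cref{2f1-divisible-by-p} by peeling off the $(\tfrac12)_r$ factor, which contributes only a $p$-adic integer. Concretely, I would write
\begin{align*}
    \frac{(\tfrac{1}{2})_r(\tfrac{1}{6})_r(\tfrac{5}{6})_r}{(r!)^3}\eq \frac{(\tfrac{1}{2})_r}{r!}\cdot\frac{(\tfrac{1}{6})_r(\tfrac{5}{6})_r}{(r!)^2}
\end{align*}
and invoke the elementary identity $(\tfrac{1}{2})_r/r!=4^{-r}\binom{2r}{r}$. Since $1728=4\cdot 432$, combining this with the identity $\frac{(\frac{1}{6})_r(\frac{5}{6})_r}{(r!)^2}=432^{-r}\frac{(6r)!}{r!(2r)!(3r)!}$ from \Cref{2f1-divisible-by-p} gives
\begin{align*}
    \frac{(\tfrac{1}{2})_r(\tfrac{1}{6})_r(\tfrac{5}{6})_r}{(r!)^3}\eq 1728^{-r}\binom{2r}{r}\frac{(6r)!}{r!(2r)!(3r)!}\eq 1728^{-r}\frac{(6r)!}{(3r)!(r!)^3},
\end{align*}
which simultaneously verifies the equality asserted in the lemma.

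For the congruence, the point is that $(\tfrac12)_r/r!$ is a $p$-adic integer: because $p\nmid 6$ implies $p\neq 2$, the factor $4^{-r}$ is a $p$-adic unit, and $\binom{2r}{r}\in\Z$, so $v_p\big((\tfrac12)_r/r!\big)\geq 0$. On the other hand, \Cref{2f1-divisible-by-p} already gives $v_p\big(\tfrac{(\frac{1}{6})_r(\frac{5}{6})_r}{(r!)^2}\big)\geq 1$ on the range $\tfrac{p^l-1}{6}<r\leq p^l-1$. Multiplying the valuations finishes the proof.

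There is essentially no obstacle here, since the statement is a direct consequence of \Cref{2f1-divisible-by-p}; the only conceptual observation is that the ratio between the $_3F_2$ and $_2F_1$ coefficients at $r$ is the central binomial coefficient $\binom{2r}{r}$, which is automatically $p$-integral for every odd prime $p$.
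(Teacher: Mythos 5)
Your argument is correct and is essentially the paper's own proof: the paper likewise reduces to \Cref{2f1-divisible-by-p} after observing that $v_p\big((\tfrac12)_r/r!\big)\geq 0$, which you justify cleanly via $(\tfrac12)_r/r!=4^{-r}\binom{2r}{r}$. Your version is slightly more explicit (verifying the stated factorial identity and citing the conclusion of \Cref{2f1-divisible-by-p} directly rather than rerunning its argument), but the route is the same.
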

\begin{proof}
The result follows from the same argument as in \Cref{2f1-divisible-by-p} and that \mbox{$v_p\big(\big(\frac{1}{2}\big)_r/r!\big)\geq 0$}.
\end{proof}

\begin{proposition}
\label{truncated-clausen}
Let $p\nmid 6$ be a rational prime and $l\in\Z^+$. Then,
\begin{align}
\label{truncated-clausen-eqn}
    \pFq{2}{1}{\frac{1}{6},\frac{5}{6}}{1}{t}_{\lfloor(p^l-1)/6\rfloor}^2\;\equiv\;\pFq{3}{2}{\frac{1}{2},\frac{1}{6},\frac{5}{6}}{1,1}{4t(1-t)}_{\lfloor(p^l-1)/6\rfloor}\pmod{p}
\end{align}
as polynomials in $t$.\footnote{One should also see \cite[Lemma~18]{chisholm-deines-long-nebe-swisher} for a mod $p^2$ version of the truncated Clausen's formula with truncation at $p-1$.}
\end{proposition}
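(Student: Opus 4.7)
The plan is to deduce \Cref{truncated-clausen} from the classical power-series identity
\begin{align*}
    \pFq{2}{1}{\tfrac{1}{6},\tfrac{5}{6}}{1}{t}^2 \eq \pFq{3}{2}{\tfrac{1}{2},\tfrac{1}{6},\tfrac{5}{6}}{1,1}{4t(1-t)}
\end{align*}
in $\Q[[t]]$, and then passing from full series to truncations modulo $p$ by means of \Cref{2f1-divisible-by-p} and \Cref{3f2-divisible-by-p}. The power-series identity itself is obtained by composing the quadratic transformation $\pFq{2}{1}{2a,2b}{a+b+\frac{1}{2}}{t}=\pFq{2}{1}{a,b}{a+b+\frac{1}{2}}{4t(1-t)}$ with Clausen's classical identity $\pFq{2}{1}{a,b}{a+b+\frac{1}{2}}{w}^2=\pFq{3}{2}{2a,2b,a+b}{2a+2b,a+b+\frac{1}{2}}{w}$, both specialized to $(a,b)=(\tfrac{1}{12},\tfrac{5}{12})$ (for which $a+b+\tfrac{1}{2}=1$, $2a=\tfrac{1}{6}$, $2b=\tfrac{5}{6}$, $a+b=\tfrac{1}{2}$, $2a+2b=1$).

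Next, set $N\deq\lfloor(p^l-1)/6\rfloor$ and note that $2N<p^l-1$. Write $a_i\deq\frac{(1/6)_i(5/6)_i}{(i!)^2}$ and $b_r\deq\frac{(1/2)_r(1/6)_r(5/6)_r}{(r!)^3}$. I would compare each side of the desired congruence with the corresponding full power series in every degree $0\leq k\leq 2N$. For the left-hand side,
\begin{align*}
    [t^k]\Big(\pFq{2}{1}{\tfrac{1}{6},\tfrac{5}{6}}{1}{t}^2-\pFq{2}{1}{\tfrac{1}{6},\tfrac{5}{6}}{1}{t}_N^2\Big)\eq\sum_{\substack{i+j=k\\ \max(i,j)>N}}a_ia_j,
\end{align*}
and for $k\leq 2N$ the index $\max(i,j)$ lies in $(N,2N]\subset(N,p^l-1)$, so by \Cref{2f1-divisible-by-p} every term vanishes modulo $p$. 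For the right-hand side, since $(4t(1-t))^r$ has lowest $t$-degree equal to $r$, the difference
\begin{align*}
    \pFq{3}{2}{\tfrac{1}{2},\tfrac{1}{6},\tfrac{5}{6}}{1,1}{4t(1-t)}-\pFq{3}{2}{\tfrac{1}{2},\tfrac{1}{6},\tfrac{5}{6}}{1,1}{4t(1-t)}_N\eq\sum_{r>N}b_r(4t(1-t))^r
\end{align*}
contributes to $[t^k]$, for $k\leq 2N$, only through $b_r$ with $N<r\leq k\leq 2N<p^l-1$, which vanish modulo $p$ by \Cref{3f2-divisible-by-p}.

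Combining these two comparisons with the classical identity of the first paragraph, the polynomial $\pFq{2}{1}{\frac{1}{6},\frac{5}{6}}{1}{t}_N^2-\pFq{3}{2}{\frac{1}{2},\frac{1}{6},\frac{5}{6}}{1,1}{4t(1-t)}_N$ has degree at most $2N$ and all of its coefficients are $\equiv 0\pmod{p}$, which is exactly the claimed congruence as polynomials in $t$. The only non-routine ingredient is the classical Clausen identity (together with the quadratic transformation), both of which are standard and can simply be cited; beyond that the proof is coefficient bookkeeping that leans on the two preceding $p$-divisibility lemmas, so I do not anticipate a serious obstacle.
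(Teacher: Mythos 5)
Your proposal is correct and follows essentially the same route as the paper: both start from the full Clausen power-series identity and then use \Cref{2f1-divisible-by-p} and \Cref{3f2-divisible-by-p} to discard the coefficients between index $\lfloor(p^l-1)/6\rfloor$ and the relevant degree bound, the only cosmetic differences being that you compare coefficients up to degree $2\lfloor(p^l-1)/6\rfloor$ while the paper truncates at $\lfloor(p^l-1)/3\rfloor$, and that you spell out the derivation of the needed Clausen identity via a quadratic transformation where the paper simply cites it.
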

\begin{proof}
Throughout this proof, for a power series $\sum_{n=0}^\infty a_nt^n$, we write
\begin{align*}
    \Trun_{m,t}\bigg(\sum_{n=0}^\infty a_nt^n\bigg)\deq\sum_{n=0}^m a_nt^n.
\end{align*}
Note that the usual Clausen's formula gives that
\begin{align*}
    \pFq{2}{1}{\frac{1}{6},\frac{5}{6}}{1}{t}^2\eq\pFq{3}{2}{\frac{1}{2},\frac{1}{6},\frac{5}{6}}{1,1}{4t(1-t)}
\end{align*}
as power series in $t$. Then,
\begin{align*}
    \Trun_{\lfloor(p^l-1)/3\rfloor,t}\bigg(\pFq{2}{1}{\frac{1}{6},\frac{5}{6}}{1}{t}^2\bigg)&\eq\Trun_{\lfloor(p^l-1)/3\rfloor,t}\bigg(\pFq{2}{1}{\frac{1}{6},\frac{5}{6}}{1}{t}_{\lfloor(p^l-1)/3\rfloor}^2\bigg)\\
    &\;\stackrel{(\ref{2f1-divisible-by-p-eqn})}{\equiv}\;\Trun_{\lfloor(p^l-1)/3\rfloor,t}\bigg(\pFq{2}{1}{\frac{1}{6},\frac{5}{6}}{1}{t}_{\lfloor(p^l-1)/6\rfloor}^2\bigg)\\
    &\;\equiv\;\pFq{2}{1}{\frac{1}{6},\frac{5}{6}}{1}{t}_{\lfloor(p^l-1)/6\rfloor}^2\pmod{p},
\end{align*}
and
\begin{align*}
    \Trun_{\lfloor(p^l-1)/3\rfloor,t}\bigg(\pFq{3}{2}{\frac{1}{2},\frac{1}{6},\frac{5}{6}}{1,1}{4t(1-t)}\bigg)&\eq\Trun_{\lfloor(p^l-1)/3\rfloor,t}\bigg(\pFq{3}{2}{\frac{1}{2},\frac{1}{6},\frac{5}{6}}{1,1}{4t(1-t)}_{\lfloor(p^l-1)/3\rfloor}\bigg) \\
    &\;\stackrel{(\ref{3f2-divisible-by-p-eqn})}{\equiv}\;\Trun_{\lfloor(p^l-1)/3\rfloor,t}\bigg(\pFq{3}{2}{\frac{1}{2},\frac{1}{6},\frac{5}{6}}{1,1}{4t(1-t)}_{\lfloor(p^l-1)/6\rfloor}\bigg) \\
    &\;\equiv\;\pFq{3}{2}{\frac{1}{2},\frac{1}{6},\frac{5}{6}}{1,1}{4t(1-t)}_{\lfloor(p^l-1)/6\rfloor}\pmod{p}.
\end{align*}
The result then follows.
\end{proof}

We are now ready to prove \Cref{congruence-theorem} in the case when $(\frac{1-1728/j_0}{\fr{p}})=1$.

\begin{proposition}
Let $\fr{p}$ be a prime of $L$ with $\fr{p}\nmid 6$ and $v_\fr{p}(j_0(j_0-1728))=0$. Suppose that $(\frac{1-1728/j_0}{\fr{p}})=1$. Then,
\begin{align*}
    a_\fr{p}(E_0)^2\;\equiv\;\pFq{3}{2}{\frac{1}{2},\frac{1}{6},\frac{5}{6}}{1,1}{1728/j_0}_{\N(\fr{p})-1}\pmod{\fr{p}}.
\end{align*}
\end{proposition}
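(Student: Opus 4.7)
The plan is to chain together the three preliminary results of this section with Corollary~\ref{ap(E0)-ap(E1)}. Since $\big(\tfrac{z_0}{\fr{p}}\big)=1$, the first branch of that corollary gives $a_\fr{p}(E_0)^2 \equiv a_\fr{P}(E_1)^2 \pmod{p}$; note that $\N(\fr{P})=\N(\fr{p})$ in this case, because either $\sqrt{z_0}\in L$ (so $\fr{P}=\fr{p}$) or $\fr{p}$ splits in $L(\sqrt{z_0})$, and in both situations $\F_\fr{P}=\F_\fr{p}$. Applying Proposition~\ref{ap(E1)} and squaring then yields
\[
a_\fr{P}(E_1)^2 \;\equiv\; \pFq{2}{1}{\frac{1}{6},\frac{5}{6}}{1}{t_0}_{\lfloor(\N(\fr{p})-1)/6\rfloor}^{2} \pmod{\fr{P}},
\]
where $t_0\deq(1-\sqrt{z_0})/2$.

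The pivotal algebraic identity is
\[
4t_0(1-t_0) \eq 1-z_0 \eq 1728/j_0.
\]
Proposition~\ref{truncated-clausen} is an identity in $\Z_{(p)}[t]$ modulo $p$, so specializing at $t=t_0$ rewrites the right-hand side of the previous display as
\[
\pFq{3}{2}{\frac{1}{2},\frac{1}{6},\frac{5}{6}}{1,1}{1728/j_0}_{\lfloor(\N(\fr{p})-1)/6\rfloor}
\]
modulo $\fr{P}$. Finally, Lemma~\ref{3f2-divisible-by-p} guarantees that every summand of index $\lfloor(\N(\fr{p})-1)/6\rfloor<r\leq\N(\fr{p})-1$ is divisible by $p$, so I can extend the truncation all the way to $\N(\fr{p})-1$ without changing the value modulo $p$. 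Since the resulting expression lies in $L$ and $\fr{P}\cap\clo_L=\fr{p}$, the mod~$\fr{P}$ congruence descends to the asserted mod~$\fr{p}$ congruence.

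The only step requiring any care is the legitimacy of specializing the polynomial Clausen congruence at $t=t_0$; this is fine because the coefficients on both sides of Proposition~\ref{truncated-clausen} have $p$-integral denominators (powers of $2$, $3$, $6$, all coprime to $p$), so the identity is genuinely one in $\Z_{(p)}[t]$ modulo $p$ and evaluates cleanly at any $\fr{P}$-integral lift of $t_0$. Otherwise the argument is a direct assembly of the preceding lemmas, and I do not expect any genuine obstacle in this split case — the nontrivial hypergeometric content has already been absorbed into the three tools being combined.
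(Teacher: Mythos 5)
Your argument is correct and follows essentially the same route as the paper's own proof: Corollary~\ref{ap(E0)-ap(E1)} and Proposition~\ref{ap(E1)} in the split case, then the truncated Clausen congruence of Proposition~\ref{truncated-clausen} at $t=(1-\sqrt{z_0})/2$, and descent from $\fr{P}$ to $\fr{p}$. You are in fact slightly more explicit than the paper in two places — justifying $\N(\fr{P})=\N(\fr{p})$ and invoking Lemma~\ref{3f2-divisible-by-p} to extend the truncation from $\lfloor(\N(\fr{p})-1)/6\rfloor$ to $\N(\fr{p})-1$ — both of which the paper leaves implicit.
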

\begin{proof}
Write $z_0=1-1728/j_0$ as before. By \Cref{ap(E0)-ap(E1)}, \Cref{ap(E1)}, and that $(\frac{z_0}{\fr{p}})=1$,
\begin{align*}
    a_\fr{p}(E_0)^2\;\equiv\;a_\fr{P}(E_1)^2\;\equiv\;\pFq{2}{1}{\frac{1}{6},\frac{5}{6}}{1}{\frac{1-\sqrt{z_0}}{2}}^2_{\lfloor\frac{\N(\fr{P})-1}{6}\rfloor}\pmod{\fr{P}},
\end{align*}
where $\fr{P}$ is some prime in $L(\sqrt{z_0})$ that lies above $\fr{p}$. It then follows from \Cref{truncated-clausen} with $t=(1-\sqrt{z_0})/2$ and $p^l=\N(\fr{P})$ that
\begin{align*}
    a_\fr{p}(E_0)^2\;\equiv\;\pFq{3}{2}{\frac{1}{2},\frac{1}{6},\frac{5}{6}}{1,1}{1728/j_0}_{\N(\fr{p})-1}\pmod{\fr{P}}.
\end{align*}
The result now follows since both sides are in $L$.
\end{proof}

\section{The case when $\big(\frac{1-1728/j_0}{\fr{p}}\big)=-1$}

In this section, we will prove \Cref{congruence-theorem} in the case when $\big(\frac{1-1728/j_0}{\fr{p}}\big)=-1$, i.e., when $z_0=1-1728/j_0$ is not a square in $\F_\fr{p}^\times$. As before, we will first show more congruences that will be needed for the proof.

\begin{lemma}
\label{pochhammer-over-factorial-congruence}
Let $p$ be a rational prime and let $l\in\Z^+$. For $a\in\Z_p^\times$, let $[a]_0$ be the least nonnegative integer such that $a\equiv[a]_0\pmod{p^l}$ and let $a'=p^{-l}(a+[-a]_0)$.\footnote{One should be cautious that in the theory of hypergeometric functions, the notations $[a]_0$ and $a'$ are usually defined with respect to $p$ but not to $p^l$.} Then, for all $a\in\Z_p^\times$ and $0\leq m\leq p^l-1$,
\begin{align*}
    \frac{(a)_{mp^l}}{(mp^l)!}\;\equiv\;\frac{(a')_m}{m!}\pmod{p}.
\end{align*}
\end{lemma}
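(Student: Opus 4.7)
I would prove this by directly expanding both Pochhammer symbols and separating the contributions of factors divisible by $p^l$ from those coprime to $p^l$.

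First, in $(a)_{mp^l}=\prod_{j=0}^{mp^l-1}(a+j)$, there are exactly $m$ factors divisible by $p^l$, occurring at $j=[-a]_0+kp^l$ for $k=0,\dots,m-1$; by the definition of $a'$, each equals $p^l(a'+k)$, and together they contribute $p^{ml}(a')_m$. Similarly, the $p^l$-divisible factors of $(mp^l)!$ are $p^l,2p^l,\dots,mp^l$, contributing $p^{ml}m!$. Pulling these out yields
\begin{align*}
    \frac{(a)_{mp^l}}{(mp^l)!}\eq \frac{(a')_m}{m!}\cdot\prod_{k=0}^{m-1}\frac{\prod_{j'\in[0,p^l-1]\setminus\{[-a]_0\}}(a+j'+kp^l)}{\prod_{j''=1}^{p^l-1}(j''+kp^l)},
\end{align*}
so the problem reduces to showing that each block ratio on the right is $\equiv 1\pmod{p}$.

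Second, I would match the remaining numerator and denominator factors via the bijection $f:\{0,\dots,p^l-1\}\setminus\{[-a]_0\}\simarrow\{1,\dots,p^l-1\}$ defined by $f(j')\deq([a]_0+j')\bmod p^l$ (this is a bijection because translation by $[a]_0$ is a bijection of $\Z/p^l\Z$ and $[-a]_0\equiv -[a]_0\pmod{p^l}$ is its unique preimage of $0$). By construction $a+j'\equiv f(j')\pmod{p^l}$, so writing $a+j'=f(j')+p^l N_{j'}$ with $N_{j'}\in\Z_p$ and letting $s\deq v_p(f(j'))\leq l-1$, one computes
\begin{align*}
    \frac{a+j'+kp^l}{f(j')+kp^l}\eq \frac{f(j')/p^s+(N_{j'}+k)p^{l-s}}{f(j')/p^s+kp^{l-s}}\;\in\;1+p\Z_p,
\end{align*}
since $l-s\geq 1$ and $f(j')/p^s\in\Z_p^\times$. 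Multiplying over all $j'\neq[-a]_0$ and $k=0,\dots,m-1$ gives the desired congruence.

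The main subtlety is conceptual: individual factors in the numerator $(a)_{mp^l}$ and denominator $(mp^l)!$ can have any $p$-adic valuation between $0$ and $l-1$, so one cannot reduce modulo $p$ factor-by-factor. The bijection $f$ circumvents this because it pairs factors with the same mod-$p^l$ residue and hence the same $p$-adic valuation, so each paired ratio is a well-defined $p$-adic unit congruent to $1$. The rest is bookkeeping.
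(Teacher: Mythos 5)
Your proof is correct and follows essentially the same route as the paper: both pull the $m$ factors divisible by $p^l$ out of $(a)_{mp^l}$ and $(mp^l)!$ to produce $p^{lm}(a')_m$ and $p^{lm}m!$, and both handle the remaining factors via the translation-by-$[a]_0$ bijection modulo $p^l$, dividing out the common $p$-adic valuation before reducing mod $p$. Your pairing of individual numerator and denominator factors into unit ratios lying in $1+p\Z_p$ is slightly cleaner bookkeeping than the paper's version, which instead compares each block to $(p^l-1)!$ and checks that the total valuation $\sum_i\lambda_{a,i}$ is independent of $a$, but the underlying idea is identical.
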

\begin{proof}
The congruence is trivially true for $m=0$, so suppose that $1\leq m\leq p^l-1$. Note that
\begin{align*}
    (a)_{mp}\eq\prod_{r=0}^{m-1}\prod_{i=0}^{p^l-1}(a+i+rp^l)&\eq\prod_{r=0}^{m-1}(a+[-a]_0+rp^l)\cdot\prod_{r=0}^{m-1}\prod_{\substack{0\leq i\leq p^l-1 \\ i\neq[-a]_0}}(a+i+rp^l) \\
    &\eq p^{lm}\cdot\prod_{r=0}^{m-1}(a'+r)\cdot\prod_{r=0}^{m-1}\prod_{\substack{0\leq i\leq p^l-1 \\ i\neq[-a]_0}}(a+i+rp^l).
\end{align*}
For $0\leq i\leq p^l-1$ with $i\neq[-a]_0$, we have for all $r\in\Z$,
\begin{align*}
    \lambda_{a,i}\deq v_p([a]_0+i) \eq v_p(a+i)\eq v_p(a+i+rp^l)\;<\;l
\end{align*}
and hence
\begin{align*}
    \frac{[a]_0+i}{p^{\lambda_{a,i}}}\;\equiv\;\frac{a+i+rp^l}{p^{\lambda_{a,i}}}\pmod{p}.
\end{align*}
Thus,
\begin{align*}
    \prod_{\substack{0\leq i\leq p^l-1 \\ i\neq[-a]_0}}\frac{a+i+rp^l}{p^{\lambda_{a,i}}}&\;\equiv\;\prod_{i=0}^{[-a]_0-1}\frac{[a]_0+i+rp^l}{p^{\lambda_{a,i}}}\cdot\prod_{i=[-a]_0+1}^{p^l-1}\frac{([a]_0+i-p^l)+(r+1)p^l}{p^{\lambda_{a,i}}} \\
    &\;\equiv\;p^{-\sum_i\lambda_{a,i}}\cdot(p^l-1)!\pmod{p},
\end{align*}
where $\sum_i$ is taken over $0\leq i\leq p^l-1$ with $i\neq[-a]_0$. Note that
\begin{align*}
    \sum_i\lambda_{a,i}\eq v_p\bigg(\prod_i\big([a]_0+i\big)\bigg)
    &\eq\sum_{s=1}^{l-1}\hash\{0\leq i\leq p^l-1\mid i\neq[-a]_0\text{ and }[a]_0+i\equiv0\pmod{p^s}\} \\
    &\eq\sum_{s=1}^{l-1}(p^{l-s}-1),
\end{align*}
so $\lambda:=\sum_i\lambda_{a,i}$ is independent of $a$. In particular,
\begin{align*}
    p^{-\lambda}\cdot\prod_{\substack{0\leq i\leq p^l-1 \\ i\neq[-a]_0}}(a+i+rp^l)\;\equiv\;p^{-\lambda}\cdot(p^l-1)!\;\equiv\;p^{-\lambda}\cdot\prod_{i=0}^{p^l-2}(1+i+rp^l)\pmod{p}.
\end{align*}
Hence, for $a\in\Z_p^\times$ and $1\leq m\leq p^l-1$,
\begin{align*}
    \frac{(a)_{mp^l}}{(mp^l)!}\eq\frac{\prod\limits_{r=0}^{m-1}(a'+r)\cdot\prod\limits_{r=0}^{m-1}\prod\limits_{\substack{0\leq i\leq p^l-1 \\ l\neq[-a]_0}}(a+i+rp^l)}{\prod\limits_{r=0}^{m-1} (1+r)\cdot\prod\limits_{r=0}^{m-1}\prod\limits_{i=0}^{p^l-2}(1+i+rp)}\;\equiv\;\frac{(a')_m}{m!}\pmod{p}.
\end{align*}
\end{proof}

\begin{lemma}
\label{2f1-p2-and-p}
Let $p\nmid 6$ be a rational prime and let $l\in\Z^+$. Then,
\begin{align}
\label{2f1-p2-and-p-eqn}
    \pFq{2}{1}{\frac{1}{6},\frac{5}{6}}{1}{t}_{p^{2l}-1}\;\equiv\;\pFq{2}{1}{\frac{1}{6},\frac{5}{6}}{1}{t}_{p^l-1}\cdot\pFq{2}{1}{\frac{1}{6},\frac{5}{6}}{1}{t^{p^l}}_{p^l-1}\pmod{p}
\end{align}
as polynomials in $t$.
\end{lemma}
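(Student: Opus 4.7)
The plan is to prove the congruence coefficient-wise. Via the bijection $(a, b) \mapsto a + bp^l$ from $\{0, \ldots, p^l - 1\}^2$ to $\{0, \ldots, p^{2l} - 1\}$, the claim reduces to
\begin{align*}
A_{a + bp^l} \;\equiv\; A_a A_b \pmod p \qquad \text{for all } 0 \le a, b \le p^l - 1,
\end{align*}
where $A_n := (1/6)_n(5/6)_n/(n!)^2$.

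Using the Pochhammer splits $(c)_{a + bp^l} = (c)_{bp^l}(c + bp^l)_a$ and $(a + bp^l)! = (bp^l)!(1 + bp^l)_a$, one decomposes
\begin{align*}
A_{a + bp^l} \;=\; \frac{(1/6)_{bp^l}(5/6)_{bp^l}}{((bp^l)!)^2} \cdot \frac{(1/6 + bp^l)_a(5/6 + bp^l)_a}{((1 + bp^l)_a)^2}.
\end{align*}
For the first factor, \Cref{pochhammer-over-factorial-congruence} gives $(c)_{bp^l}/(bp^l)! \equiv (c')_b/b! \pmod p$, and a direct computation shows $\{(1/6)', (5/6)'\} = \{1/6, 5/6\}$ in both regimes $p^l \equiv 1 \pmod 6$ (unchanged) and $p^l \equiv 5 \pmod 6$ (swapped); hence the first factor is congruent to $(1/6)_b(5/6)_b/(b!)^2 = A_b \pmod p$.

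For the second factor, rewrite it as $A_a \cdot R$, where
\begin{align*}
R \;=\; \prod_{k=0}^{a-1} \frac{\bigl(1 + \tfrac{bp^l}{1/6 + k}\bigr)\bigl(1 + \tfrac{bp^l}{5/6 + k}\bigr)}{\bigl(1 + \tfrac{bp^l}{1 + k}\bigr)^2}.
\end{align*}
Each sub-factor $1 + bp^l/(c + k)$ is a $p$-adic integer, and is congruent to $1 \pmod p$ unless $v_p(c + k) \ge l$. The latter happens only at $k = [-c]_0$ and requires $a > [-c]_0$; for $c = 1$ this would need $a > p^l - 1$, which never holds. Consequently $R \in \Z_p$, with $R \equiv 1 \pmod p$ exactly when $a \le \min([-1/6]_0, [-5/6]_0) = \lfloor (p^l - 1)/6 \rfloor$.

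The proof concludes by case analysis. If $a \le \lfloor (p^l - 1)/6 \rfloor$, then $R \equiv 1 \pmod p$, so combining with the first factor gives $A_{a + bp^l} \equiv A_a A_b \pmod p$. Otherwise $a > \lfloor (p^l - 1)/6 \rfloor$, and \Cref{2f1-divisible-by-p} forces $A_a \equiv 0 \pmod p$; hence $A_a A_b \equiv 0 \pmod p$, while the factorisation $A_{a + bp^l} = A_{bp^l} \cdot A_a \cdot R$ with all factors in $\Z_p$ gives $A_{a + bp^l} \equiv 0 \pmod p$ as well. The main subtlety is that the range of $a$ in which $R \not\equiv 1 \pmod p$ coincides precisely with the range in which \Cref{2f1-divisible-by-p} forces both sides to vanish mod $p$.
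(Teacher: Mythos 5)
Your proof is correct and follows essentially the same route as the paper: split the index as $a+bp^l$, factor the Pochhammer symbols as $(c)_{a+bp^l}=(c)_{bp^l}(c+bp^l)_a$, apply \Cref{pochhammer-over-factorial-congruence} together with the observation that $\{(1/6)',(5/6)'\}=\{1/6,5/6\}$, and reduce to a coefficientwise congruence. Your explicit treatment of the correction factor $R$ and the case $a>\lfloor(p^l-1)/6\rfloor$ via \Cref{2f1-divisible-by-p} is in fact more careful than the paper's one-line assertion that $\frac{(a+mp^l)_n}{(1+mp^l)_n}\equiv\frac{(a)_n}{n!}\pmod p$ (your only slips are cosmetic: $R\equiv 1\pmod p$ can also occur for large $a$ when $p\mid b$, so ``exactly when'' should be ``whenever'', and the integrality of the sub-factor at $k=[-c]_0$ silently uses that $c'$ is a $p$-adic unit).
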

\begin{proof}
Fix the rational prime $p\nmid 6$ and $l\in\Z^+$ and adopt the notations $[a]_0$ and $a'$ in \Cref{pochhammer-over-factorial-congruence} with respect to $p^l$. Then, by \Cref{pochhammer-over-factorial-congruence}, for $a\in\Z_p^\times$ and $0\leq m,n\leq p^l-1$,
\begin{align*}
    \frac{(a)_{mp^l+n}}{(mp^l+n)!}\eq\frac{(a)_{mp^l}(a+mp^l)_n}{(mp^l)!(1+mp^l)_n}\;\equiv\;\frac{(a')_m(a)_n}{m!\cdot n!}\pmod{p}.
\end{align*}
In our case, we see that $((\frac{1}{6})',(\frac{5}{6})')=(\frac{1}{6},\frac{5}{6})$ or $(\frac{5}{6},\frac{1}{6})$ depending on $p^l\pmod{6}$. Hence, for $0\leq m,n\leq p^l-1$
\begin{align*}
    \frac{(\frac{1}{6})_{mp^l+n}(\frac{5}{6})_{mp^l+n}}{((mp^l+n)!)^2}\;\equiv\;\frac{(\frac{1}{6})_m(\frac{5}{6})_m}{(m!)^2}\cdot\frac{(\frac{1}{6})_n(\frac{5}{6})_n}{(n!)^2}\pmod{p}.
\end{align*}
This implies that
\begin{align*}
    \pFq{2}{1}{\frac{1}{6},\frac{5}{6}}{1}{t}_{p^{2l}-1}&\eq\sum_{m,n=0}^{p^l-1}\frac{(\frac{1}{6})_{mp^l+n}(\frac{5}{6})_{mp^l+n}}{((mp^l+n)!)^2}\cdot t^{mp^l+n} \\
    &\;\equiv\;\bigg(\sum_{m=0}^{p^l-1}\frac{(\frac{1}{6})_m(\frac{5}{6})_m}{(m!)^2}\cdot(t^{p^l})^m\bigg)\cdot\bigg(\sum_{n=0}^{p^l-1}\frac{(\frac{1}{6})_n(\frac{5}{6})_n}{(n!)^2}\cdot t^n\bigg) \\
    &\;\equiv\;\pFq{2}{1}{\frac{1}{6},\frac{5}{6}}{1}{t}_{p^l-1}\cdot\pFq{2}{1}{\frac{1}{6},\frac{5}{6}}{1}{t^{p^l}}_{p^l-1}\pmod{p}
\end{align*}
\end{proof}

\begin{proposition}
\label{2f1-z-to-(1-z)}
Let $p\nmid 6$ be a rational prime and let $l\in\Z^+$. Then,
\begin{align}
    \pFq{2}{1}{\frac{1}{6},\frac{5}{6}}{1}{t}_{\lfloor (p^l-1)/6\rfloor}\;\equiv\;(-1)^{(p^l-1)/2}\cdot\pFq{2}{1}{\frac{1}{6},\frac{5}{6}}{1}{1-t}_{\lfloor (p^l-1)/6\rfloor}\pmod{p}
\end{align}
as polynomials in $t$.\footnote{One should also see \cite[Theorem~4.2]{2f1-congruence} and \cite[Lemma~7.2]{babei-roy-swisher-tobin-tu}, both of which imply a mod $p^2$ version of the result with truncation at $p-1$.}
\end{proposition}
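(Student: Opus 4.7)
The plan is to lift the character-sum computation in the proof of \Cref{ap(E1)} to a polynomial identity in a formal variable $s=\sqrt{z_0}$, and then exploit the substitution $x\mapsto-x-\frac{1}{6}$ on $\F_{p^l}$. The first step is the observation that the derivation in the proof of \Cref{ap(E1)} actually establishes the congruence
\begin{align*}
    \pFq{2}{1}{\frac{1}{6},\frac{5}{6}}{1}{\frac{1-s}{2}}_{\lfloor(p^l-1)/6\rfloor}\;\equiv\;-\sum_{x\in\F_{p^l}}\bigg(x^3+\frac{1}{4}x^2-\frac{1-s}{864}\bigg)^{(p^l-1)/2}\pmod{p}
\end{align*}
as elements of $\F_p[s]$, since each ingredient used there (multinomial expansion, the standard evaluation of $\sum_{x\in\F_{p^l}}x^j\bmod p$, the identity $4^{(p^l-1)/2}\equiv 1\pmod p$, and the term-wise congruence of \Cref{congruence-needed-for-ap(E1)}) is a polynomial manipulation in $s$.

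The crucial step is a direct calculation showing that the bijection $y=-x-\frac{1}{6}$ of $\F_{p^l}$ satisfies
\begin{align*}
    y^3+\frac{1}{4}y^2-\frac{1+s}{864}\eq-\bigg(x^3+\frac{1}{4}x^2-\frac{1-s}{864}\bigg).
\end{align*}
Raising to the $\frac{p^l-1}{2}$-th power and summing over $y\in\F_{p^l}$ then gives
\begin{align*}
    \sum_{x\in\F_{p^l}}\bigg(x^3+\frac{1}{4}x^2-\frac{1-s}{864}\bigg)^{(p^l-1)/2}\eq(-1)^{(p^l-1)/2}\sum_{y\in\F_{p^l}}\bigg(y^3+\frac{1}{4}y^2-\frac{1+s}{864}\bigg)^{(p^l-1)/2},
\end{align*}
and combining this with the first-step identity (applied both as stated and with $s$ replaced by $-s$) yields
\begin{align*}
    \pFq{2}{1}{\frac{1}{6},\frac{5}{6}}{1}{\frac{1-s}{2}}_{\lfloor(p^l-1)/6\rfloor}\;\equiv\;(-1)^{(p^l-1)/2}\pFq{2}{1}{\frac{1}{6},\frac{5}{6}}{1}{\frac{1+s}{2}}_{\lfloor(p^l-1)/6\rfloor}\pmod{p}
\end{align*}
in $\F_p[s]$. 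Substituting $t=\frac{1-s}{2}$ (so $\frac{1+s}{2}=1-t$) then gives the claimed polynomial identity in $\F_p[t]$.

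The main obstacle is finding the substitution $y=-x-\frac{1}{6}$; once guessed, its verification is a one-line expansion. Its existence is geometrically natural: the elliptic curves $y^2+xy=x^3-\frac{1\mp s}{864}$ over $\Q(s)$ share the $j$-invariant $1728/(1-s^2)$ and are quadratic twists of each other by $-1$, so one expects such an involution on the Weierstrass model to realize this isomorphism at the level of the character sum.
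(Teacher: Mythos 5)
Your proof is correct, but it takes a genuinely different route from the paper's. The paper rewrites the truncated sum as $\sum_r\binom{A}{r}\binom{B}{r}t^r$ with $A=[-\frac16]_0$, $B=[-\frac56]_0$, $A+B=p^l-1$, reads this off as the constant term of $(1+tX)^A(1+X^{-1})^B$, converts it into the character sum $-\sum_{x}h(x,t)^A$ with $h(X,t)=X(1+tX)/(1+X)$, and exploits the fractional-linear substitution $X\mapsto\frac{1+tX}{t-1}$, which satisfies $h\big(\frac{1+tX}{t-1},1-t\big)=-h(X,t)$; since that substitution only makes sense pointwise for $t=t_0\in\F_{p^l}\setminus\{0,1\}$, a final degree count is needed to upgrade the pointwise equality to a polynomial congruence. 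You instead recycle the cubic character sum already computed in the proof of \Cref{ap(E1)}, correctly observing that the derivation there is a termwise polynomial manipulation in $s=\sqrt{z_0}$ (only exponents $r_0\le\lfloor(p^l-1)/6\rfloor$ survive on both sides, so the degrees match), and you replace the M\"obius substitution by the affine involution $x\mapsto -x-\frac16$, which indeed negates $x^3+\frac14x^2-\frac{1-s}{864}$ while swapping $s\leftrightarrow -s$ (the constant terms work out via $\frac1{144}-\frac1{216}=\frac1{432}=\frac{2}{864}$). Your route buys a simpler substitution to verify and dispenses with the interpolation/degree argument entirely, and it explains the sign $(-1)^{(p^l-1)/2}$ geometrically as the quadratic twist by $-1$ relating the two cubic models; the cost is that it is tied to the specific Weierstrass cubic and to the computation in \Cref{ap(E1)}, whereas the paper's constant-term argument is internal to the hypergeometric side and adapts verbatim to other parameter pairs $(a,1-a)$ with $[{-a}]_0+[{-(1-a)}]_0=p^l-1$.
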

\begin{proof}
As before, fix the rational prime $p\nmid 6$ and $l\in\Z^+$ and adopt the notation $[a]_0$ in \Cref{pochhammer-over-factorial-congruence} with respect to $p^l$. Using the similar argument as in \Cref{pochhammer-over-factorial-congruence}, it is easy to show that for $a\in\Z_p^\times$ and $0\leq r\leq[-a]_0$,
\begin{align*}
    (-1)^r\cdot\frac{(a)_r}{r!}\eq\frac{(-a)(-a-1)\cdots(-a-r+1)}{r!}&\;\equiv\;\frac{([-a]_0)([-a]_0-1)\cdots([-a]_0-r+1)}{r!} \\
    &\eq\binom{[-a]_0}{r}\pmod{p}.
\end{align*}
In our case, it is easy to check that
\begin{align*}
    [-\tfrac{1}{6}]_0+[-\tfrac{5}{6}]_0\eq p^l-1\quad\text{ and }\quad\min\big\{[-\tfrac{1}{6}]_0,[-\tfrac{5}{6}]_0\big\}\eq\lfloor\tfrac{p^l-1}{6}\rfloor.
\end{align*}
Let $A=[-\frac{1}{6}]_0$ and $B=[-\frac{5}{6}]_0$. Then,
\begin{align*}
    \pFq{2}{1}{\frac{1}{6},\frac{5}{6}}{1}{t}_{\lfloor\frac{p^l-1}{6}\rfloor}&\eq\sum_{r=0}^{\lfloor (p^l-1)/6\rfloor}\frac{(\frac{1}{6})_r}{r!}\cdot\frac{(\frac{5}{6})_r}{r!}\cdot t^r \\
    &\;\equiv\;\sum_{r=0}^{\min\{A,B\}}\binom{A}{r}\binom{B}{r}\cdot t^r \\
    &\;\equiv\;\text{the constant term of }(1+tX)^A\cdot (1+X^{-1})^B\text{ with respect to }X \pmod{p}.
\end{align*}
Since $\sum_{x\in\F_{p^l}^\times}x^i\equiv\begin{cases}0&\text{ if }(p^l-1)\nmid i\\ -1&\text{ if }(p^l-1)\mid i\end{cases}$, we obtain that
\begin{align*}
    \pFq{2}{1}{\frac{1}{6},\frac{5}{6}}{1}{t}_{\lfloor (p^l-1)/6\rfloor}\eq-\sum_{x\in\F_{p^l}^\times}(1+tx)^A\cdot(1+x^{-1})^B\eq-\sum_{x\in\F_{p^l}\bslash\{0,-1\}}\bigg(\frac{x(1+tx)}{1+x}\bigg)^A
\end{align*}
as elements in $\F_{p^l}[t]$, where we use that $A+B=p^l-1$ and that $(1+x^{-1})^{p^l-1}=1$ for $x\in\F_{p^l}\bslash\{0,-1\}$. Now, let
\begin{align*}
    h(X,t)\eq\frac{X(1+tX)}{1+X}.
\end{align*}
Then,
\begin{align*}
    h\bigg(\frac{1+tX}{t-1},1-t\bigg)\eq\frac{\frac{1+tX}{t-1}\cdot(1-(1+tX))}{1+\frac{1+tX}{t-1}}\eq\frac{(1+tX)(-tX)}{t+tX}\eq-h(X,t).
\end{align*}
Hence, plugging in $t=t_0\in\F_{p^l}\bslash\{0,1\}$, we obtain that
\begin{align*}
    \pFq{2}{1}{\frac{1}{6},\frac{5}{6}}{1}{t_0}_{\lfloor (p^l-1)/6\rfloor}\eq-\sum_{x\in\F_{p^l}\bslash\{0,-1\}}h(x,t_0)^A 
    &\eq-(-1)^A\sum_{x\in\F_{p^l}\bslash\{0,-1\}}h\bigg(\frac{1+t_0x}{t_0-1},1-t_0\bigg)^A \\
    &\eq(-1)^A\cdot\pFq{2}{1}{\frac{1}{6},\frac{5}{6}}{1}{1-t_0}_{\lfloor (p^l-1)/6\rfloor},
\end{align*}
where the last equality follows from the bijectivity of the map $x\mapsto\frac{1+t_0x}{t_0-1}$ on $\F_{p^l}\backslash\{-1\}$. In particular, the two polynomials $\smallpFq{2}{1}{\frac{1}{6},\frac{5}{6}}{1}{t}_{\lfloor (p^l-1)/6\rfloor}$ and $(-1)^A\cdot\smallpFq{2}{1}{\frac{1}{6},\frac{5}{6}}{1}{1-t}_{\lfloor (p^l-1)/6\rfloor}$ agree on $\F_{p^l}\bslash\{0,1\}$, so their difference is divisible by $\sum_{i=0}^{p^l-2}t^i$. As both polynomials are of degree $\leq\lfloor\frac{p^l-1}{6}\rfloor$, they must agree. The result then follows by noting that $(-1)^A=(-1)^{(p^l-1)/2}$.
\end{proof}

\begin{corollary}
\label{2f1-p2-and-p-evaluate}
Let $\fr{p}$ be a prime of $L$ with $\fr{p}\nmid 6$. Let $z_0\in L$ be such that $v_\fr{p}(z_0)=0$ and $\big(\frac{z_0}{\fr{p}}\big)=-1$. Then,
\begin{align}
\label{2f1-p2-and-p-evaluate-eqn}
    \pFq{2}{1}{\frac{1}{6},\frac{5}{6}}{1}{\frac{1-\sqrt{z_0}}{2}}_{\N(\fr{p})^2-1}\;\equiv\;\bigg(\frac{-1}{\fr{p}}\bigg)\cdot\pFq{2}{1}{\frac{1}{6},\frac{5}{6}}{1}{\frac{1-\sqrt{z_0}}{2}}^2_{\N(\fr{p})-1}\pmod{\fr{P}},
\end{align}
where $\fr{P}$ is the unique prime in $L(\sqrt{z_0})$ that lies above $\fr{p}$.
\end{corollary}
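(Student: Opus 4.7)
The plan is to chain together three ingredients already established in the excerpt: Lemma \ref{2f1-p2-and-p} (which factors the truncated $_2F_1$ at truncation $p^{2l}-1$ as a product of two truncations at $p^l-1$), the fact that $\fr{p}$ is inert in $L(\sqrt{z_0})/L$ (so Frobenius on $\F_\fr{P}$ sends $\sqrt{z_0}$ to $-\sqrt{z_0}$), and Proposition \ref{2f1-z-to-(1-z)} (the $t\leftrightarrow 1-t$ symmetry of the truncated $_2F_1$). Since $\big(\frac{z_0}{\fr{p}}\big)=-1$, $\fr{p}$ is inert in $L(\sqrt{z_0})$, so $\fr{P}$ is indeed unique above $\fr{p}$ and $\N(\fr{P})=\N(\fr{p})^2$; set $p^l=\N(\fr{p})$.

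First, I would apply Lemma \ref{2f1-p2-and-p} with this $p^l$ and evaluate the resulting polynomial congruence at $t=\frac{1-\sqrt{z_0}}{2}$. Since both sides of that lemma are polynomials in $\Z_{(p)}[t]$ and $\frac{1-\sqrt{z_0}}{2}$ is integral at $\fr{P}$, we obtain
\begin{align*}
    \pFq{2}{1}{\frac{1}{6},\frac{5}{6}}{1}{\frac{1-\sqrt{z_0}}{2}}_{\N(\fr{p})^2-1}\;\equiv\;\pFq{2}{1}{\frac{1}{6},\frac{5}{6}}{1}{\frac{1-\sqrt{z_0}}{2}}_{\N(\fr{p})-1}\cdot\pFq{2}{1}{\frac{1}{6},\frac{5}{6}}{1}{\bigg(\frac{1-\sqrt{z_0}}{2}\bigg)^{\N(\fr{p})}}_{\N(\fr{p})-1}\pmod{\fr{P}}.
\end{align*}

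Next, I would rewrite the second factor using Frobenius. Because $\fr{p}$ is inert in $L(\sqrt{z_0})$, $\F_\fr{P}=\F_\fr{p}(\sqrt{z_0})$ and the $\N(\fr{p})$-power map on $\F_\fr{P}$ is the nontrivial element of $\Gal(\F_\fr{P}/\F_\fr{p})$, which sends $\sqrt{z_0}\mapsto-\sqrt{z_0}$. Combined with Fermat's little theorem applied to $2$, this gives $\big(\tfrac{1-\sqrt{z_0}}{2}\big)^{\N(\fr{p})}\equiv\tfrac{1+\sqrt{z_0}}{2}\pmod{\fr{P}}$. Since the truncated $_2F_1$ sum is a polynomial whose coefficients are integral at $\fr{p}$ (they equal $432^{-r}\cdot\frac{(6r)!}{r!(2r)!(3r)!}$, and $\fr{p}\nmid 6$), we may substitute this congruent value and replace the second factor by $\smallpFq{2}{1}{\frac{1}{6},\frac{5}{6}}{1}{\frac{1+\sqrt{z_0}}{2}}_{\N(\fr{p})-1}$.

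Finally, I would convert $\frac{1+\sqrt{z_0}}{2}$ back to $\frac{1-\sqrt{z_0}}{2}$: Lemma \ref{2f1-divisible-by-p} allows me to shrink the truncation from $\N(\fr{p})-1$ to $\lfloor(\N(\fr{p})-1)/6\rfloor$ modulo $p$, and then Proposition \ref{2f1-z-to-(1-z)} applied with $t=\frac{1-\sqrt{z_0}}{2}$ (so $1-t=\frac{1+\sqrt{z_0}}{2}$) yields
\begin{align*}
    \pFq{2}{1}{\frac{1}{6},\frac{5}{6}}{1}{\frac{1+\sqrt{z_0}}{2}}_{\N(\fr{p})-1}\;\equiv\;(-1)^{(\N(\fr{p})-1)/2}\cdot\pFq{2}{1}{\frac{1}{6},\frac{5}{6}}{1}{\frac{1-\sqrt{z_0}}{2}}_{\N(\fr{p})-1}\pmod{\fr{P}}.
\end{align*}
Stringing the three congruences together and using Euler's criterion $(-1)^{(\N(\fr{p})-1)/2}=\big(\frac{-1}{\fr{p}}\big)$ delivers the corollary. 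I do not anticipate a real obstacle; the only delicate point is the Frobenius identification $(\sqrt{z_0})^{\N(\fr{p})}\equiv-\sqrt{z_0}\pmod{\fr{P}}$, which, however, is immediate from inertness.
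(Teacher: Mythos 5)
Your proposal is correct and follows essentially the same route as the paper: the paper's proof is exactly the combination of Lemma \ref{2f1-p2-and-p} and Proposition \ref{2f1-z-to-(1-z)} at $t=(1-\sqrt{z_0})/2$, $p^l=\N(\fr{p})$, together with the Frobenius identity $((1-\sqrt{z_0})/2)^{\N(\fr{p})}\equiv(1+\sqrt{z_0})/2\pmod{\fr{P}}$. Your write-up merely makes explicit the intermediate use of Lemma \ref{2f1-divisible-by-p} to match the truncation bounds, which the paper leaves implicit.
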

\begin{proof}
This simply follows from applying \Cref{2f1-p2-and-p} and \Cref{2f1-z-to-(1-z)} with $t=(1-\sqrt{z_0})/2$ and $p^l=\N(\fr{p})$, where we make use of the fact that $((1-\sqrt{z_0})/2)^{\N(\fr{p})}\equiv(1+\sqrt{z_0})/2\pmod{\fr{P}}$ since $(\frac{z_0}{\fr{p}})=-1$.
\end{proof}

We are now ready to prove \Cref{congruence-theorem} in the case when $(\frac{1-1728/j_0}{\fr{p}})=-1$.

\begin{proposition}
Let $\fr{p}$ be a prime of $L$ with $\fr{p}\nmid 6$ and $v_\fr{p}(j_0(j_0-1728))=0$. Suppose that $(\frac{1-1728/j_0}{\fr{p}})=-1$. Then,
\begin{align*}
    a_\fr{p}(E_0)^2\;\equiv\;-\pFq{3}{2}{\frac{1}{2},\frac{1}{6},\frac{5}{6}}{1,1}{1728/j_0}_{\N(\fr{p})-1}\pmod{\fr{p}}.
\end{align*}
\end{proposition}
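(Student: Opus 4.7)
The plan is to assemble Corollary~\ref{ap(E0)-ap(E1)}, Proposition~\ref{ap(E1)}, Corollary~\ref{2f1-p2-and-p-evaluate}, and Proposition~\ref{truncated-clausen} in sequence, exploiting that in the non-split case the relevant ramification makes $\N(\fr{P})=\N(\fr{p})^2$. Write $z_0=1-1728/j_0$ and let $\fr{P}$ be the unique prime of $L(\sqrt{z_0})$ above $\fr{p}$. By Corollary~\ref{ap(E0)-ap(E1)} in the non-square case,
\begin{align*}
    a_\fr{p}(E_0)^2\;\equiv\;-\bigg(\frac{-1}{\fr{p}}\bigg)\,a_\fr{P}(E_1)\pmod{p},
\end{align*}
so the task reduces to identifying $a_\fr{P}(E_1)$ modulo $\fr{P}$ with $(\frac{-1}{\fr{p}})$ times the truncated $\pFq{3}{2}{}{}{}$-sum at $1728/j_0$.

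Next, I would apply Proposition~\ref{ap(E1)} with $\fr{P}$, giving
\begin{align*}
    a_\fr{P}(E_1)\;\equiv\;\pFq{2}{1}{\frac{1}{6},\frac{5}{6}}{1}{\frac{1-\sqrt{z_0}}{2}}_{\lfloor(\N(\fr{p})^2-1)/6\rfloor}\pmod{\fr{P}},
\end{align*}
and then use Lemma~\ref{2f1-divisible-by-p} (with $p^l=\N(\fr{p})^2$) to lift the truncation index from $\lfloor(\N(\fr{p})^2-1)/6\rfloor$ up to $\N(\fr{p})^2-1$ without changing the residue class. At this point Corollary~\ref{2f1-p2-and-p-evaluate} applies directly and yields
\begin{align*}
    a_\fr{P}(E_1)\;\equiv\;\bigg(\frac{-1}{\fr{p}}\bigg)\cdot\pFq{2}{1}{\frac{1}{6},\frac{5}{6}}{1}{\frac{1-\sqrt{z_0}}{2}}^2_{\N(\fr{p})-1}\pmod{\fr{P}}.
\end{align*}

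To convert the squared $\pFq{2}{1}{}{}{}$ to $\pFq{3}{2}{}{}{}$, I would lower each factor's truncation back down to $\lfloor(\N(\fr{p})-1)/6\rfloor$ using Lemma~\ref{2f1-divisible-by-p} again, invoke truncated Clausen (Proposition~\ref{truncated-clausen}) with $p^l=\N(\fr{p})$ and $t=(1-\sqrt{z_0})/2$, and use the identity $4t(1-t)=1-z_0=1728/j_0$. Finally, Lemma~\ref{3f2-divisible-by-p} raises the truncation of the resulting $\pFq{3}{2}{}{}{}$ back to $\N(\fr{p})-1$. Combining with the initial relation and using $(\frac{-1}{\fr{p}})^2=1$ gives
\begin{align*}
    a_\fr{p}(E_0)^2\;\equiv\;-\pFq{3}{2}{\frac{1}{2},\frac{1}{6},\frac{5}{6}}{1,1}{1728/j_0}_{\N(\fr{p})-1}\pmod{\fr{P}},
\end{align*}
and since both sides lie in $L$ and $\fr{P}\cap\mathcal{O}_L=\fr{p}$, the congruence descends to mod $\fr{p}$.

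There is no real obstacle—everything has been set up to make this a bookkeeping exercise. The only delicate point is keeping track of which modulus ($p$, $\fr{p}$, or $\fr{P}$) each congruence lives in, since the intermediate steps are phrased over $L(\sqrt{z_0})$ while the final statement is over~$L$; the lifts of truncation indices via Lemmas~\ref{2f1-divisible-by-p} and \ref{3f2-divisible-by-p} are what make all of these intermediate congruences compatible modulo $\fr{P}$.
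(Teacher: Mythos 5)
Your proposal is correct and follows essentially the same chain of reductions as the paper's own proof (\Cref{ap(E0)-ap(E1)}, \Cref{ap(E1)}, lifting and lowering truncation indices via \Cref{2f1-divisible-by-p}, then \Cref{2f1-p2-and-p-evaluate}, \Cref{truncated-clausen}, and descent from $\fr{P}$ to $\fr{p}$); you even make explicit the final application of \Cref{3f2-divisible-by-p}, which the paper leaves implicit. One minor wording point: $\N(\fr{P})=\N(\fr{p})^2$ holds because $\fr{p}$ is \emph{inert} in $L(\sqrt{z_0})$, not because of ramification.
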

\begin{proof}
Write $z_0=1-1728/j_0$ as before. By \Cref{ap(E0)-ap(E1)}, \Cref{ap(E1)}, and that $(\frac{z_0}{\fr{p}})=-1$,
\begin{align*}
    a_\fr{p}(E_0)^2\;\equiv\;-\bigg(\frac{-1}{\fr{p}}\bigg)\cdot a_{\fr{P}}(E_1)\;\equiv\;-\bigg(\frac{-1}{\fr{p}}\bigg)\cdot\pFq{2}{1}{\frac{1}{6},\frac{5}{6}}{1}{\frac{1-\sqrt{z_0}}{2}}_{\lfloor\frac{\N(\fr{P})-1}{6}\rfloor}\pmod{\fr{P}},
\end{align*}
where $\fr{P}$ is the unique prime in $L(\sqrt{z_0})$ that lies above $\fr{p}$. As $\N(\fr{P})=\N(\fr{p})^2$, by \Cref{2f1-divisible-by-p} and \Cref{truncated-clausen} with $t=(1-\sqrt{z_0})/2$ and $p^l=\N(\fr{p})$, and by \Cref{2f1-p2-and-p-evaluate}, we have
\begin{align*}
    \bigg(\frac{-1}{\fr{p}}\bigg)\cdot\pFq{2}{1}{\frac{1}{6},\frac{5}{6}}{1}{\frac{1-\sqrt{z_0}}{2}}_{\lfloor\frac{\N(\fr{P})-1}{6}\rfloor}&\;\stackrel{(\ref{2f1-divisible-by-p-eqn})}{\equiv}\;\bigg(\frac{-1}{\fr{p}}\bigg)\cdot\pFq{2}{1}{\frac{1}{6},\frac{5}{6}}{1}{\frac{1-\sqrt{z_0}}{2}}_{\N(\fr{p})^2-1} \\
    &\;\stackrel{(\ref{2f1-p2-and-p-evaluate-eqn})}{\equiv}\;\pFq{2}{1}{\frac{1}{6},\frac{5}{6}}{1}{\frac{1-\sqrt{z_0}}{2}}^2_{\N(\fr{p})-1} \\
    &\;\stackrel{(\ref{2f1-divisible-by-p-eqn})}{\equiv}\;\pFq{2}{1}{\frac{1}{6},\frac{5}{6}}{1}{\frac{1-\sqrt{z_0}}{2}}^2_{\lfloor\frac{\N(\fr{p})-1}{6}\rfloor} \\
    &\;\stackrel{(\ref{truncated-clausen-eqn})}{\equiv}\;\pFq{3}{2}{\frac{1}{2},\frac{1}{6},\frac{5}{6}}{1,1}{1-z_0}_{\lfloor\frac{\N(\fr{p})-1}{6}\rfloor}\pmod{\fr{P}}.
\end{align*}
Hence, 
\begin{align*}
    a_\fr{p}(E_0)^2\;\equiv\;-\pFq{3}{2}{\frac{1}{2},\frac{1}{6},\frac{5}{6}}{1,1}{1728/j_0}_{\N(\fr{p})-1}\pmod{\fr{P}}.
\end{align*}
The result then follows since both sides are in $L$.
\end{proof}

\bibliographystyle{amsalpha}
\bibliography{ref}

\end{document}